\newcommand{\Gr}{{\operatorname{Gr}}}
\renewcommand{\Im}{{\operatorname{Im}}}
\newcommand{\CH}{\operatorname{CH}}
\newcommand{\Spec}{\operatorname{Spec}}
\renewcommand{\dim}{\operatorname{dim}}
\renewcommand{\Im}{\operatorname{Im}}
\newcommand{\wt}{\widetilde}
\newcommand{\C}{\mathbb{C}}
\renewcommand{\P}{\mathbb{P}}
\newcommand{\Q}{\mathbb{Q}}
 \newcommand{\sC}{\mathcal{C}}
\newcommand{\sM}{\mathcal{M}}
\newcommand{\JJ}{\mathcal{J}}
\newcommand{\sO}{\mathcal{O}}
\newcommand{\sB}{\mathcal{B}}
 \newcommand{\sF}{\mathcal{F}}
\newcommand{\sX}{\mathcal{X}}
\newcommand{\sY}{\mathcal{Y}}
\newcommand{\ima}{\hbox{Im}}
\newcommand{\id}{\hbox{id}}
\newcommand{\one}{\mathds{1}}
 \numberwithin{equation}{section}
\theoremstyle{plain}
\newtheorem{thm}[equation]{Theorem}
\newtheorem{prop}[equation]{Proposition}
\newtheorem{lm}[equation]{Lemma}
\newtheorem{cor}[equation]{Corollary}
\newtheorem{conj}[equation]{Conjecture}
\newtheorem{sublm}[equation]{Sublemma}
\newtheorem{convention}{Conventions}
\DeclareMathOperator{\GDCH}{GDCH}
\theoremstyle{definition}
\newtheorem{defn}[equation]{Definition}
\newtheorem{rk}[equation]{Remark}
\newtheorem{nonumbering}{Theorem}
\newtheorem{nonumberingc}{Corollary}
\thanks{\textit{2020 Mathematics Subject Classification:}  14C15, 14C25, 14C30}
\keywords{algebraic cycles, Chow groups, motive, Fano varieties of K3 type, Beauville ``splitting property'' conjecture, multiplicative Chow--K\"unneth decomposition}
\thanks{M.B. and R.L. are supported by ANR grant ANR-20-CE40-0023. M.B. has benefitted of the support of the \it Dotation Exceptionnelle 2022 \rm of the Université de Montpellier} 
\newtheorem{nonumberingt}{Acknowledgments}
\newif\ifHideFoot
\newcommand{\Michele}[1]{}
\newcommand{\Robert}[1]{}
\newcommand{\marg}[1]{\normalsize{{
			\color{red}\footnote{{\color{blue}#1}}}{\marginpar[\vskip
			-.25cm{\color{red}\hfill$\Rightarrow$\tiny\thefootnote}]{\vskip
				-.2cm{\color{red}$\Leftarrow$\tiny\thefootnote}}}}}
\newcommand{\Michele}[1]{\marg{(Michele) #1}}
\newcommand{\Robert}[1]{\marg{(Robert) #1}}
\begin{document}

\title{On the Chow ring of Fano fourfolds of K3 type}

\author[M. Bolognesi]{Michele Bolognesi}
\address{Institut Montpellierain Alexander Grothendieck \\ %
Universit\'e de Montpellier \\ %
CNRS \\ %
Case Courrier 051 - Place Eug\`ene Bataillon \\ %
34095 Montpellier Cedex 5 \\ %
France}
\email{michele.bolognesi@umontpellier.fr}

\author[R. Laterveer]{Robert Laterveer}
\address{Institut de Recherche MathÃ©matique Avanc\'ee, CNRS \\ %
Universit\'e de Strasbourg, \\ %
 7 Rue Ren\'e Descartes,\\ %
 67084 Strasbourg Cedex, France.}
\email{robert.laterveer@math.unistra.fr}
 
\dedicatory{To the memory of Alberto Collino, with friendship and gratitude}

\begin{abstract} 
We show that a wide range of Fano varieties of K3 type, recently constructed by Bernardara, Fatighenti, Manivel and Tanturri in \cite{B+}, have a multiplicative Chow-K\"unneth decomposition, in the sense of Shen--Vial. It follows that the Chow ring of these Fano varieties behaves like that of K3 surfaces.
As a side result, we obtain some criteria for the Franchetta property of blown-up projective varieties.

\end{abstract}
\maketitle

\section{Introduction}
%The goal of this paper is to show that some classes of Fano varieties discovered in \cite{BFMT} have a multiplicative Chow-KÃŒnneth decomposition in the sense of Shen--Vial \cite{SV}. 

Given a smooth projective variety $X$ over $\C$, let $\CH^i(X):=CH^i(X)_{\Q}$ denote the Chow groups of $X$ (i.e. the groups of codimension $i$ algebraic cycles on $X$ with $\Q$-coefficients, modulo rational equivalence). The intersection product defines a ring structure on $\CH^\ast(X)=\bigoplus_i \CH^i(X)$, the {\em Chow ring\/} of $X$ \cite{F}. 

In the special case of K3 surfaces, this ring structure has a remarkable property:

\begin{thm}[Beauville--Voisin \cite{BV}]\label{bv} Let $S$ be a projective K3 surface. 
%Let $D_i, D_i^\prime\in A^1(S)$ be a finite number of divisors. Then
%  \[ \sum_i D_i\cdot D_i^\prime=0\ \ \ \hbox{in}\ A^2(S)_{}\ \Leftrightarrow\ \sum_i D_i\cdot D_i^\prime=0\ \ \ \hbox{in}\ H^4(S,\QQ)\ .\]
The $\Q$-subalgebra
  \[   \bigl\langle  \CH^1(S), c_j(S) \bigr\rangle\ \ \ \subset\ \CH^\ast(S) \]
  injects into cohomology under the cycle class map.
  \end{thm}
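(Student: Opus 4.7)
The strategy is to exhibit a canonical $0$-cycle $c_S \in \CH^2(S)$ with the property that every codimension-$2$ element of the subalgebra $\langle \CH^1(S), c_j(S)\rangle$ is a rational multiple of $c_S$. Since a K3 surface satisfies $\Pic^0(S) = 0$, the cycle class map is automatically injective on $\CH^1(S)$, and on $\Q \cdot c_S \subset \CH^2(S)$ it is just the degree map $\Q \hookrightarrow \Q$. So it will suffice to show that every product $D \cdot D'$ of divisors and every Chern class $c_j(S)$ lies in $\Q c_S$. The starting point for constructing $c_S$ is the theorem of Bogomolov--Mumford: every complex projective K3 surface carries a rational curve $C_0$, and since $\CH_0(\P^1) = \Z$ the class $c_S := [p] \in \CH^2(S)$ is independent of the point $p \in C_0$ chosen.

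The crux of the argument --- and the main obstacle --- is to prove that $D \cdot D' \in \Q c_S$ for all $D, D' \in \CH^1(S)$. The plan is to invoke the stronger form of Bogomolov--Mumford, namely that every divisor class on $S$ is rationally equivalent to a $\Q$-linear combination of rational curves; this reduces the problem to intersections $C \cdot C'$ of two rational curves, whose associated $0$-cycle is supported on points lying on rational curves. One then argues, via a connecting argument that links any two rational curves on $S$ through chains of intersections, that the class in $\CH_0(S)$ of each such point is equal to $c_S$. This well-definedness of $c_S$ independently of the chosen rational curve is the heart of the Beauville--Voisin insight and is where the subtlety lies.

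Finally, for the Chern classes: $c_1(S) = 0$ since $K_S \cong \sO_S$, so only $c_2(S)$ requires attention. By Gauss--Bonnet, $\deg c_2(S) = \chi_{\mathrm{top}}(S) = 24$, so the remaining task is to show the refined rational equivalence $c_2(S) \in \Q c_S$. This can be done by realising $c_2(T_S)$ as a $0$-cycle geometrically supported on a (union of) rational curve(s) --- for example as the vanishing locus of an appropriate section of a twist of $T_S$, or via a Bloch--Srinivas type argument applied to a covering family of rational curves on $S$ --- whence the equality $c_2(S) = 24\, c_S$ follows from the degree computation.
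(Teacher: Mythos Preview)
The paper does not prove this theorem; it is quoted from \cite{BV} as background, so there is no ``paper's own proof'' to compare against. Your outline is essentially the original Beauville--Voisin strategy: define the canonical class $c_S$ as the class of a point on a rational curve, reduce $D\cdot D'$ to intersections of rational curves, and verify $c_2(S)=24\,c_S$.

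Two places deserve sharpening. First, the well-definedness of $c_S$ is not proved by linking arbitrary pairs of rational curves through chains --- there is no reason two rational curves should be joined by such a chain. The actual argument fixes one ample class $H$ whose linear system contains a \emph{connected} sum of rational curves $E=\sum C_i$ (connected because $H\cdot C_i>0$), defines $c_S$ via $E$, and then observes that any rational curve $R$ satisfies $R\cdot H>0$, hence $R$ meets $E$, hence any point of $R$ has class $c_S$. Second, the step $c_2(S)=24\,c_S$ is the most delicate part of \cite{BV}, and your sketch (``realise $c_2$ as supported on rational curves, or Bloch--Srinivas'') does not yet pin down a mechanism. Beauville and Voisin prove this by a computation with the diagonal $\Delta_S\subset S\times S$ and its self-intersection, combined with a decomposition of $\Delta_S$ in $\CH^2(S\times S)$; alternatively one can argue via specialization to Kummer surfaces. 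Either way, it is not a one-line consequence of the earlier steps and would need to be filled in.
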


This remarkable behaviour of K3 surfaces has led to Beauville's ``splitting property conjecture'' \cite{Beau3}, and to the concept of {\em multiplicative Chow--K\"unneth decomposition\/} \cite{SV}. In short, a multiplicative Chow--K\"unneth decomposition (which we will abbreviate to {\em MCK decomposition\/}) is a graded decomposition of the motive of a variety that is compatible with intersection product (cf. Section \ref{mckdeco} below for details). Inspecting the proof of Theorem \ref{bv}, one can deduce that K3 surfaces have an MCK decomposition, which gives a nice, conceptual explanation for Theorem \ref{bv}.

The following conjecture has been formulated in \cite[Conjecture 1.4]{44} (cf. also \cite[Section 1.2]{FLV2}):

\begin{conj}\label{conj} Let $X$ be a Fano variety of K3 type (i.e.\footnote{There are other possible definitions that are less restrictive, as in \cite{B+} where some Fano fourfolds have $H^3(X,\Q)\not=0$. However, the Fano fourfolds considered in this paper are of K3 type in the narrow sense of our definition.} $X$ has dimension $2d$ and the Hodge numbers of $X$ verify $h^{p,q}(X)=0$ for all $p\not=q$ except for $h^{d-1,d+1}(X)=h^{d+1,d-1}(X)=1$).
Then $X$ has an MCK decomposition.
\end{conj}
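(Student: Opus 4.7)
The plan is to attack the conjecture case-by-case, using explicit geometric descriptions of the Fano varieties of K3 type (for instance, those constructed in \cite{B+}), and to produce the required multiplicative Chow-K\"unneth decomposition in the framework of Shen-Vial. The overall strategy proceeds in three steps.

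First, I would produce a Chow-K\"unneth decomposition
\[ \mathfrak{h}(X) = \bigoplus_j \one(-i_j)^{\oplus n_j} \,\oplus\, \mathfrak{t}(X), \]
in which $\mathfrak{t}(X)$ is a ``transcendental'' direct summand of K3 type. The Hodge-theoretic hypothesis of the conjecture forces the Hodge realization of $\mathfrak{t}(X)$ to match the transcendental cohomology of a K3 surface $S$ shifted by $d-1$, where $S$ is the K3 surface naturally attached to $X$ (typically arising from the Kuznetsov component of $\Db(X)$).

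Second, I would upgrade this numerical observation to an actual isomorphism of Chow motives $\mathfrak{t}(X) \cong \mathfrak{t}(S)(-(d-1))$ by constructing explicit algebraic correspondences between $X$ and $S$, typically coming from a moduli-theoretic or incidence-theoretic description of the pair $(X,S)$. Once this isomorphism is established, the Chow-K\"unneth decomposition of $X$ is essentially dictated by that of $S$, for which the Beauville-Voisin result (Theorem~\ref{bv}) already provides the MCK structure.

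Third, and most delicately, I would verify multiplicativity, i.e., that the small diagonal $\delta_X \in \CH^{4d}(X^3)$ lies in the correct graded piece with respect to the induced Chow-K\"unneth decomposition of $X^3$. Concretely, one needs a ``distinguished subring'' $R^\ast(X) \subset \CH^\ast(X)$ containing all divisor classes and Chern classes of $X$, injecting into cohomology, and closed under intersection with transcendental cycles in the appropriate codimension. For the families in \cite{B+}, where $X$ is realised as the zero locus of a section of a vector bundle on a Grassmannian-type ambient space (possibly after blowing up), the natural tool is the Franchetta property: if every generically defined cycle on $X$, on $X\times X$ and on $X\times X\times X$ is tautological (a polynomial in restrictions of Chern classes of tautological bundles from the ambient space), then the required multiplicativity identities reduce to identities in cohomology, which hold automatically. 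The main obstacle is therefore to establish the Franchetta property on double and triple self-products, and this is precisely where the paper's new criteria for the Franchetta property of blown-up projective varieties, announced in the abstract, are expected to enter.
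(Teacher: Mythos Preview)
First, note that this statement is a \emph{conjecture}, not a theorem: the paper does not prove it in general, but only verifies it for the specific families listed in Table~1 (Theorem~\ref{main}). Any ``proof proposal'' can therefore at best be a strategy for those cases.

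Your three-step outline diverges from the paper's actual argument in two essential respects.

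Your Step~2 --- identifying $\mathfrak{t}(X)\cong\mathfrak{t}(S)(-(d-1))$ for an actual K3 surface $S$ --- is neither used nor needed. The paper never constructs such an isomorphism; indeed for a general cubic fourfold (underlying the C-families) no associated K3 surface is expected to exist. More to the point, even when such an isomorphism holds it does not by itself yield multiplicativity: an MCK decomposition is a constraint on the small diagonal in $X^3$, not merely on the shape of $h(X)$.

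Your Step~3 overshoots. You propose to check multiplicativity via the Franchetta property on $X$, $X\times X$ and $X\times X\times X$, and you read the paper's new Franchetta criteria as tools for self-products. That is not what they are for. The paper instead exploits that every fourfold $\wt X$ in Table~1 is a blow-up $\wt X=Bl_Y X$ in which \emph{one of $X$ or $Y$ has trivial Chow groups} and the other already carries a generically defined MCK decomposition. Shen--Vial's blow-up criterion (Proposition~\ref{bucrit}) then reduces everything to checking that $c_\ast(N_{Y/X})\in\CH^\ast_{(0)}(Y)$ and that the graph of $\iota\colon Y\hookrightarrow X$ is of pure grade~$0$. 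Because one factor has trivial Chow groups, $\CH^\ast(X\times Y)$ splits as a finite sum of copies of $\CH^\ast$ of the other factor, so both conditions follow from the Franchetta property for a \emph{single} family $\sX\to B$ or $\sY\to B$ (Proposition~\ref{busurface}); no Franchetta on double or triple self-products is ever invoked. The new technical results (Lemma~\ref{bufrank1}, Proposition~\ref{frk3}) serve precisely to establish this one-variable Franchetta property relative to the correct common base $B$.

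In summary: the paper's route is ``blow-up criterion plus one-variable Franchetta''; your route is ``motive identification plus three-variable Franchetta''. The former is both what the paper does and what is feasible for the families in Table~1.
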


The raison d'\^etre for this conjecture is that Fano varieties of K3 type are expected to be related to hyperk\"ahler varieties, and hyperk\"ahler varieties are
expected to have an MCK decomposition \cite{SV}.
Conjecture \ref{conj} has been verified in some cases: cubic fourfolds \cite{Diaz}, \cite{FLV2}, certain K\"uchle fourfolds \cite{37}, \cite{FLV2}, certain varieties on the Fatighenti--Mongardi list \cite{40}, \cite{41}, \cite{44}, \cite{59}.
%\cite{}\Michele{references needed}

Recently, Bernardara, Fatighenti, Manivel and Tanturri \cite{B+} have constructed many new Fano fourfolds of K3 type, by considering sections of vector bundles on products of homogeneous varieties.
The goal of this paper is to verify Conjecture \ref{conj} for the Fano fourfolds of \cite{B+}. 
%In favourable cases, we have succeeded.
The following is our main result:

\begin{nonumbering}[=Theorem \ref{main}] Let $X$ be one of the Fano fourfolds in Table 1. Then $X$ has an MCK decomposition.
\end{nonumbering}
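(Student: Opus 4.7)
The plan is to verify the theorem case-by-case for each fourfold in Table~1, following the ``spread out and apply Franchetta'' strategy initiated by Fu--Laterveer--Vial, augmented with new input that handles the blow-up constructions appearing in \cite{B+}. Every $X$ from Table~1 is either a direct zero locus of a general section of a homogeneous vector bundle $E$ on a product $Y$ of flag varieties, or a blow-up of such a zero locus along a smooth subvariety; in each case, let $B$ parametrise the relevant smooth data and let $\pi\colon \sX\to B$ denote the resulting universal family.

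\emph{Step 1 (Franchetta property).} First, establish the Franchetta property for $\pi$ and for the relative self-product $\sX\times_B \sX\to B$: every cycle on total space whose cohomology class vanishes on the general fibre is already rationally trivial on that fibre. In the zero-locus cases this follows from the Schubert generation of $\CH^\ast(Y)$, the homogeneity of $E$, and the standard stratification/projective-bundle machinery applied to $B\subset H^0(Y,E)$. In the blow-up cases it requires the Franchetta criteria for blown-up varieties that the paper announces as its side result: these reduce Franchetta on a family of $\mathrm{Bl}_Z(X')$ to Franchetta on the families of $X'$ and $Z$ plus an appropriate compatibility of normal bundles.

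\emph{Step 2 (MCK construction and multiplicativity).} The K3-type Hodge diamond forces the only non-trivial Chow--K\"unneth components to lie in degrees $0,2,4,6,8$, with the middle one splitting into a Tate piece and a K3-like transcendental piece. Produce idempotents $\pi^i_X\in\CH^4(X\times X)$ as explicit polynomials in generically defined (``tautological'') classes: restrictions from $Y$, Chern classes of $E$, Chern classes of $T_X$ (controlled through the normal bundle sequence $0\to T_X\to T_Y|_X\to E|_X\to 0$), and multiples of a distinguished $0$-cycle. Multiplicativity then amounts to checking
\[
 \pi^k_X\circ \delta^{sm}_X\circ (\pi^i_X\otimes \pi^j_X)\;=\;0 \quad\text{whenever}\ i+j\neq k,
\]
where $\delta^{sm}_X\in\CH^8(X^3)$ is the small diagonal viewed as a correspondence $X\times X\to X$. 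Each such relation lies between generically defined cycles on $\sX\times_B\sX\times_B\sX$, and it holds in cohomology for Hodge-theoretic reasons; the Franchetta property, extended to the triple self-product by an iterative argument, lifts the cohomological identity to an equality in $\CH^\ast$.

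The principal obstacle is Step~1 for the blow-up cases. The centre $Z$ of the blow-up may itself vary in a non-trivial family over $B$, so the classical blow-up formula for Chow groups must be relativised, and one has to isolate precise hypotheses on $Z\subset X'$ ensuring that every generically defined cycle on $\mathrm{Bl}_Z(X')$ decomposes into pieces controlled by generically defined cycles on $X'$ and on $Z$. Once these blow-up Franchetta criteria are in place, the remaining verification in Step~2 is case-by-case but essentially mechanical, guided by the explicit description of each $X$ in Table~1 and by the fact that the ambient $\CH^\ast(Y)$ is generated by Schubert classes.
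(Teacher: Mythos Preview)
Your outline follows the direct ``FLV2 template'': build generically defined projectors on the Fano fourfold $\wt{X}$ itself and then verify the multiplicativity relations in $\CH^8(\wt{X}^3)$ by appealing to the Franchetta property for $\wt{\sX}\times_B\wt{\sX}\times_B\wt{\sX}$. The paper does \emph{not} proceed this way. Instead it exploits that every $\wt{X}$ in Table~1 is a blow-up $\wt{X}=\mathrm{Bl}_Y X$ with \emph{one} of $X,Y$ having trivial Chow groups (for the C-families the center $Y$ is a rational surface; for the K3-families the base $X$ has trivial Chow groups). One then applies the Shen--Vial blow-up criterion (Proposition~\ref{bucrit}): an MCK on $\wt{X}$ is inherited from MCK's on $X$ and $Y$ once $c_\ast(N_{Y/X})\in\CH^\ast_{(0)}(Y)$ and $\Gamma_\iota\in\CH^n_{(0)}(X\times Y)$. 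Because one factor has trivial Chow groups, $\CH^\ast(X\times Y)$ is a finite sum of copies of $\CH^\ast$ of the other factor, so these conditions reduce (Proposition~\ref{busurface}) to the Franchetta property for the \emph{single} family $\sX\to B$ (C-cases) or $\sY\to B$ (K3-cases). The new technical ingredients Lemma~\ref{bufrank1} and Proposition~\ref{frk3} are designed precisely to establish these one-factor Franchetta statements over the common base $B$; no Franchetta for self-products of $\wt{\sX}$ is ever needed.

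The genuine gap in your plan is the sentence ``the Franchetta property, extended to the triple self-product by an iterative argument''. This is the crux, and it is not justified. Already for $\wt{\sX}\times_B\wt{\sX}$ the blow-up formula produces mixed pieces governed by $\CH^\ast(Y\times Y)$; in the K3-families $Y$ is (birational to) a K3 surface, so you would need Franchetta for $\sY\times_B\sY$ and $\sY\times_B\sY\times_B\sY$ over the non-standard base $B$ coming from the Cayley-trick description. That is essentially the generalized Franchetta conjecture for K3 self-products, which is far from routine and is exactly what the paper's approach is engineered to avoid. Likewise, for the C-families your Step~1 would require Franchetta for $\sX\times_B\sX$ with $X$ a cubic fourfold over the \emph{blow-up} parameter space $B$ (not the usual $\P H^0(\P^5,\sO(3))$), and the projective-bundle/stratification machinery you invoke does not deliver this without further hypotheses. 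In short, the strategy is in the right spirit, but the step that carries all the weight is asserted rather than proved; the paper's use of Proposition~\ref{bucrit} is what replaces it.
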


As a concrete consequence of Theorem \ref{main}, the Chow ring of these Fano varieties shows a behaviour similar to that of K3 surfaces:

\begin{nonumberingc}[=Corollary \ref{mainc}] Let $X$ be one of the Fano fourfolds in Table 1. Then the image of the intersection product map
  \[ \CH^1(X)\otimes \CH^2(X)\ \to\  \CH^3(X) \]
  injects into cohomology. In other words, there are $\rho:=\dim H^2(X,\Q)$ distinguished 1-cycles $\ell_1,\ldots,\ell_\rho$ such that
  \[  \Im\Bigl(  \CH^1(X)\otimes \CH^2(X)\ \to\  \CH^3(X)\Bigr) =\bigoplus_{i=1}^\rho \Q[\ell_i]\ .\]
  \end{nonumberingc}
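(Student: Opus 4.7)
The plan is to deduce the corollary from the MCK decomposition furnished by Theorem~\ref{main} via a combination of multiplicativity and the specific shape of the Chow motive of a narrow K3-type Fano fourfold. First I would invoke the standard setup: an MCK decomposition yields a bigrading $\CH^\ast(X)=\bigoplus_j \CH^\ast(X)_{(j)}$ satisfying $\CH^i(X)_{(j)}\cdot\CH^{i'}(X)_{(j')}\subseteq\CH^{i+i'}(X)_{(j+j')}$, together with the standard consequence that divisor classes lie entirely in degree zero, so $\CH^1(X)=\CH^1(X)_{(0)}$.

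The next step is to pin down the other pieces involved. Using the narrow K3-type hypothesis (so that $H^1(X)=H^3(X)=0$ and $H^2(X)$ is of Tate type), the Chow motive should refine as
\[
h(X)=\un\oplus\L^{\oplus b_2}\oplus\bigl(\L^{2,\oplus b_4^{\mathrm{alg}}}\oplus t(X)\bigr)\oplus\L^{3,\oplus b_6}\oplus\L^4,
\]
with $t(X)$ a K3-like sub-motive arising from the transcendental part of $H^4(X,\Q)$. From this one reads off that $\CH^2(X)=\CH^2(X)_{(0)}$ (since the summands $h^1$, $h^2$, $h^3$ contribute nothing to $\CH^2$) and that $\CH^3(X)_{(0)}=\CH^3(h^6(X))\cong\Q^{\rho}$ maps isomorphically onto $H^6(X,\Q)$ under the cycle class map---here using that $H^6(X,\Q)$ is purely of Hodge type $(3,3)$ and spanned by products of divisors via hard Lefschetz.

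Assembling these facts, multiplicativity gives
\[
\CH^1(X)\cdot\CH^2(X)=\CH^1(X)_{(0)}\cdot\CH^2(X)_{(0)}\subseteq\CH^3(X)_{(0)}\hookrightarrow H^6(X,\Q),
\]
so the image of $\CH^1(X)\otimes\CH^2(X)\to\CH^3(X)$ injects into cohomology. For the explicit basis, I would apply hard Lefschetz: for an ample class $H$, the map $D\mapsto D\cdot H^2$ sends $\CH^1(X)$ isomorphically (in cohomology) onto $H^6(X,\Q)$; hence taking $D_1,\ldots,D_\rho$ a basis of $\CH^1(X)$ and setting $\ell_i:=D_i\cdot H^2$ produces the desired $\rho$ distinguished 1-cycles whose span equals the image.

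The main potential obstacle is verifying that the MCK constructed in Theorem~\ref{main} is in fact compatible with the refined motive decomposition above---in particular, that $\CH^2(X)$ is concentrated in degree zero and that the natural map $\CH^3(X)_{(0)}\to H^6(X,\Q)$ is injective. Both of these are standard consequences of MCK for varieties whose motive decomposes into Tate pieces and a single K3-like summand, and should follow from the techniques of \cite{SV} and the subsequent MCK literature, once the explicit form of the CK projectors built in the proof of the main theorem is inspected.
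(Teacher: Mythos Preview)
Your proposal is correct in outline and follows the same skeleton as the paper: confine the intersection image to $\CH^3_{(0)}(X)$ via multiplicativity and then argue this piece injects into $H^6(X,\Q)$. The difference lies in how the two supporting facts are justified. For $\CH^2(X)=\CH^2_{(0)}(X)$, the paper does not appeal to a refined motivic decomposition; instead it uses the much more elementary observation (Lemma~\ref{only}) that a Fano variety has $\CH_0=\Q$, so Bloch--Srinivas gives $\CH^2_{hom}(X)\hookrightarrow\JJ^3(X)$, and $\JJ^3(X)=0$ since $H^3(X,\Q)=0$. This sidesteps entirely the question of whether $h^2(X)$ is of Tate type. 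For the injectivity $\CH^3_{(0)}(X)\hookrightarrow H^6(X,\Q)$, the paper again avoids asserting a global motive decomposition and instead exploits the concrete blow-up description of each $X$ in Table~1: Lemma~\ref{injection2} reduces the support condition on $\pi^{6}_X,\pi^{8}_X$ to the corresponding conditions on the base fourfold and the center, both of which are immediate (one has trivial Chow groups, the other is a K3 or rational surface). Your route via the decomposition $h(X)=\un\oplus\L^{\oplus b_2}\oplus\cdots$ would also work, but the ``potential obstacle'' you flag at the end---that the MCK projectors actually realize this splitting with no phantom summands---is precisely the content the paper circumvents by arguing more directly. In short: same architecture, but the paper's verifications are more economical and do not require establishing the full Tate-plus-K3 shape of the Chow motive.
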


 To prove Theorem \ref{main}, we apply a general criterion dealing with MCK decompositions and blow-ups (Proposition \ref{bucrit}). In order to check that the hypotheses of this general criterion are verified, it suffices that certain families verify the {\em Franchetta property\/} (this is the content of Proposition \ref{busurface}). To establish the required instances of the Franchetta property, we were led to develop certain new techniques that might hold some independent interest (cf. Lemma \ref{bufrank1} and Proposition \ref{frk3}).

\newcommand{\sQ}{\mathcal{Q}}
\newcommand{\sU}{\mathcal{U}}
\newcommand{\sR}{\mathcal{R}}

 \vskip0.4cm

\begin{convention} In this paper, the word {\sl variety\/}  refers to a reduced irreducible scheme of finite type over $\C$. A {\sl subvariety\/} is a (possibly reducible) reduced subscheme which is equidimensional. All cycle class groups will be with rational coefficients. We write $\CH^i_{hom}(X)\subset \CH^i(X)$ for the subgroup of homologically trivial cycles.
A roman $X$ will indicate one variety over $\Spec \C$, while an italic $\sX$ will mean a family over a different scheme.

The contravariant category of Chow motives (i.e., pure motives with respect to rational equivalence as in \cite{Sc}, \cite{MNP}) will be denoted 
$\sM_{\rm rat}$.
\end{convention}

\vskip0.8cm

\begin{table}
\centering
\begin{tabular}{ ||c  c||} 
 \hline
 Fano fourfold: &  Ambient space $M$, vector bundle $F$: \\ 
 [0.5ex]
 \hline\hline
 C-1 &  $\P^1\times \P^5,\ \sO (0,3) \oplus \sO(0,1)$ \\
  \hline
 C-3 &  $\P^2 \times \P^5,\ \sO(1,2)\oplus \sQ_{\P^2}(0,1)$ \\ 
  \hline
%  C-4 &  $\P^3\times \P^5,\ \sO(1,2)\oplus \sQ_{\P^3}(0,1)%$\\
%  \hline
%  C-5 &  $\P^5 \times Gr(2,8),\ \sU^*_{Gr(2,8)}(1,0)\oplus %\sO (1,1) \oplus  \sQ_{\P^5} \boxtimes \sU^*_{Gr(2,8)}$   %\\
% \hline 
 C-6 &  $\P^4 \times \P^5,\ \sQ_{\P^4}(0,1)\oplus \sO(2,1)$ \\
 \hline 
% C-7 &  $\P^4 \times \P^5,\ \sO(1,1)\oplus \bigwedge^3 %%\sQ_{\P^4}(0,1)$ \\
% \hline 
% C-8 &  $Fl(1,3,8),\ \sQ_2^2\oplus \sO(1,1) \oplus %\sR_2^*(1,1)$ \\
% \hline
% C-9 &  $ \P^5 \times Gr(2,4),\ \sU_{Gr(2,4)}%^*(1,0)^{\oplus 2}\oplus \sO(1,1)$\\
% \hline
 C-10 &  $\P^1\times\P^1\times\P^5,\ \sO(0,0,3)\oplus \sO(0,1,1)\oplus \sO(1,0,1)$\\
 \hline 
 C-12 &  $ \P^1\times \P^2\times \P^5,\ \sO(0,1,2)\oplus \sQ_{\P^2}(0,0,1) \oplus \sO(1,0,1)$\\
% \hline
% C-13 &  $\P^1 \times Fl(1,2,6),\ \sQ_2\oplus \sO(0,1,2) \oplus \sO(1,0,1)$ \\ 
% \hline
 %C-14 &  $\P^1 \times \P^3 \times \P^5,\ \sO(0,1,2)\oplus \sQ_{\P^3}(0,0,1) \oplus \sO(1,1,0)$ \\ 
 \hline
 C-15 &  $\P^2_1 \times \P^2_2 \times \P^5,\ \sQ_{\P^2_1}(0,0,1)\oplus \sQ_{\P^2_2}(0,0,1)\oplus \sO(1,1,1)$ \\
 %\hline
 %C-16 &  $\P^2\times \P^4 \times \P^5,\ \sQ_{\P^4}(0,0,1)\oplus \sO(1,1,1) \oplus \sQ_{\P^2}(0,1,0)$ \\
 %\hline
 %C-17 &  $\P^2\times \P^3 \times \P^5,\ \sQ_{\P^3}(0,0,1) \oplus \sO(1,0,2) \oplus \sQ_{\P^2}(0,1,0)$ \\ 
 \hline
 K3-24 &  $\P^1 \times \Gr(2,5),\ \sO(1,1)\oplus \sU^*_{\Gr(2,5)}(0,1)$ \\
 \hline
 K3-25 &  $\P^1 \times \Gr(2,6),\ \sO(1,1)\oplus \sO(0,1)^{\oplus 4}$ \\
 \hline
 K3-26 &  $\P^1 \times \Gr(3,6),\ \sO(1,1)\oplus \sO(0,1)^{\oplus 2}\oplus \bigwedge \sU^*_{\Gr(3,6)}$ \\
 \hline
 % K3-27 &  $Fl(1,2,8),\ \sQ_2\oplus\sO(1,1)\oplus \sO(0,2)^{\oplus 2}$  \\
 %\hline 
 K3-28 &  $\P^1 \times \Gr(2,7),\ \sO(0,1)\oplus \sO(1,1) \oplus \sQ^*_{\Gr(2,7)}(0,1)$ \\
 \hline 
 K3-30 &  $\P^1 \times \Gr(2,4),\ \sO(1,2)$ \\
 \hline
 %K3-31 &  $\P^3\times \P^6,\ \sO(0,2)\oplus \sO(1,1)\oplus \bigwedge^2\sQ_{\P^3}(0,1)$\\
 %\hline
% K3-32 & $Fl(1,2,5), \ \sO(1,1)\oplus \sO(0,1)^{\oplus 2}$\\
%  \hline
% K3-36 &  $\P^1\times \P^1 \times \P^4,\ \sO(0,0,3)\oplus \sO(1,1,1)$ \\ 
% \hline
% K3-37 &  $\P^1\times \P^1\times \P^5,\  \sO(0,0,2)^{\oplus 2}\oplus \sO(1,1,1)$\\
% \hline
% K3-38 &  $\P^1 \times \P^1 \times Gr(2,5),\ \sO(0,0,1)^{\oplus 3}\oplus \sO(1,1,1)$\\
%  \hline
% K3-39 &  $\P^1\times \P^2 \times Gr(2,4),\ \sQ_{\P^2}(0,0,1)\oplus \sO(1,1,1)$\\ 
 % \hline
 K3-40 &  $\P^1_1\times \P^1_2 \times \P^5,\ \sO(0,0,2)\oplus \sO(0,1,1) \oplus \sO(1,0,2)$\\
 \hline
 K3-41 &  $\P^1\times \P^1\times \P^3,\ \sO(1,1,2)$\\
 \hline
 %K3-42 &  $\P^1\times\P^2\times Gr(2,4),\ \sQ_{Gr(2,4)}(0,1,0)\oplus \sO(1,0,2)$ \\
 %\hline
 K3-46 &  $\P^1\times \P^3\times \P^5,\ \sQ_{\P^3}(0,0,1)\oplus \sO(0,1,1)\oplus \sO(1,1,1)$\\
 \hline
 K3-47 &  $\P^1\times \P^3\times \P^3,\ \sO(0,1,1)^{\oplus 2}\oplus \sO(1,1,1)$\\
 \hline
 %K3-48 &  $\P^1\times Fl(1,2,4),\ \sO(0;0,1)\oplus \sO(1;1,1)$ \\
% \hline
 K3-49 &  $\P^1\times \P^2 \times \P^4,\ \sO(0,0,2)\oplus \sO(0,1,1) \oplus \sO(1,1,1)$ \\
 \hline
 %K3-50 &  $\P^3\times \P^4\times \P^4,\ \sO(1,1,1)\oplus \sO_{\P^3}(0,1,0)\oplus \sO_{\P^3}(0,0,1)$ \\
  %\hline
 %K3-51 &  $\P^1 \times \P^2 \times \P^3,\ \sO(0,1,2)\oplus \sO(1,1,1)$\\ 
 %\hline
 %K3-53 &  $\P^2\times\P^3\times \P^4,\ \sO_{\P^3}(0,0,1)\oplus \sO(1,1,0) \oplus \sO(1,1,1)$ \\
 %\hline
 K3-55 &  $\P^1\times \P^4\times \P^5,\ \sQ_{\P^4}(0,0,1)\oplus \sO(0,2,0) \oplus \sO(1,1,1)$ \\
 \hline
 K3-56 &  $\P^1 \times \P^1 \times \P^2\times \P^3,\ \sQ_{\P^2}(0,0,0,1)\oplus \sO(1,1,1,1)$ \\
 \hline
 K3-58 &  $\P^1\times \P^1 \times \P^1 \times \P^3,\ \sO(0,0,1,1)\oplus \sO(1,1,0,2)$ \\
 \hline
 K3-59 &  $\P^1\times\P^1\times \P^2\times \P^2,\ \sO(0,0,1,1)\oplus\sO(1,1,1,1)$ \\
 \hline
 K3-60 &  $(\P^1)^5,\ \sO(1,1,1,1,1)$\\
 [1ex]
 \hline

\end{tabular}
\caption{Fano fourfolds of K3 type, with their labelling as in \cite{B+}. The notation $M, F$ indicates that the Fano fourfold $X$ is obtained as section of the vector bundle $F$ on the variety $M$.}
\label{table:1}
%\end{center}
\end{table}

\section{Preliminaries}

\subsection{MCK decomposition}\label{mckdeco}

\begin{defn}[Murre \cite{Mur}] Let $X$ be a smooth projective $n$-dimensional variety.. We say that $X$ has a {\em CK decomposition\/} if there exists a decomposition of the diagonal
   \[ \Delta_X= \pi^0_X+ \pi^1_X+\cdots +\pi^{2n}_X\ \ \ \hbox{in}\ \CH^n(X\times X)\ ,\]
  such that:
  
\begin{enumerate}
  
\item  the cycles $\pi^i_X$ are idempotents and $\pi_X^i\circ \pi_X^j=0$ if $i\neq j$;
\item $(\pi^i_X)_*H^*(X,\Q)=H^i(X,\Q)$.
  
\end{enumerate}  
  
  (NB: ``CK decomposition'' stands for ``Chow--K\"unneth decomposition''.)
\end{defn}

\begin{rk} According to Murre's conjectures \cite{Mur}, \cite{J4}, all smooth projective varieties should have a CK decomposition.
\end{rk}

\begin{defn}[Shen--Vial \cite{SV}] Let $X$ be a smooth projective variety of dimension $n$. Let $\Delta_X^{sm}\in \CH^{2n}(X\times X\times X)$ be the class of the small diagonal
  \[ \Delta_X^{sm}:=\bigl\{ (x,x,x)\ \vert\ x\in X\bigr\}\ \subset\ X\times X\times X\ .\]
  An {\em MCK decomposition\/} is a CK decomposition $\{\pi^i_X\}$ of $X$ that is {\em multiplicative\/}. That is, it satisfies
  
  \begin{equation}\label{vani} \pi^k_X\circ \Delta_X^{sm}\circ (\pi^i_X\times \pi^j_X)=0\ \ \ \hbox{in}\ \CH^{2n}(X\times X\times X)\ \ \ \hbox{for\ all\ }i+j\not=k,\end{equation}
 where $\pi_X^i\times \pi^j_X$ is by definition $(p_{13})^\ast(\pi^i_X)\cdot (p_{24})^\ast(\pi^j_X)\in \CH^{2n}(X^4)$, and $p_{rs}\colon X^4\to X^2$ is the projection on $r$th and $s$th factors.
  
 (NB: ``MCK decomposition'' stands for ``multiplicative Chow--K\"unneth decomposition''.) 
 
\end{defn} 
 
% A {\em weak MCK decomposition\/} is a CK decomposition $\{\pi^X_i\}$ of $X$ that satisfies
%    \[ \Bigl(\pi^X_k\circ \Delta_X^{sm}\circ (\pi^X_i\times \pi^X_j)\Bigr){}_\ast (a\times b)=0 \ \ \ \hbox{for\ all\ } a,b\in\ A^\ast(X)\ .\]
  %\end{defn}
  
  \begin{rk}\label{rem:mck} It is not hard to see that the vanishing \eqref{vani} is always true modulo homological equivalence. This is due to the standard fact that the cup product in cohomology respects the grading.
  \end{rk}
  
    The small diagonal (considered as a correspondence from $X\times X$ to $X$) induces the {\em multiplication morphism\/}
    \[ \Delta_X^{sm}\colon\ \  h(X)\otimes h(X)\ \to\ h(X)\ \ \ \hbox{in}\ \sM_{\rm rat}\ .\]
 Now, suppose that $X$ has a CK decomposition as follows:
  \[ h(X)=\bigoplus_{i=0}^{2n} h^i(X)\ \ \ \hbox{in}\ \sM_{\rm rat}\ .\]
 This decomposition is by definition multiplicative if for any $i,j$ the composition
  \[ h^i(X)\otimes h^j(X)\ \to\ h(X)\otimes h(X)\ \xrightarrow{\Delta_X^{sm}}\ h(X)\ \ \ \hbox{in}\ \sM_{\rm rat}\]
  factors through $h^{i+j}(X)$.
  
If we assume that $X$ has an MCK decomposition, then, by setting
    \[ \CH^i_{(j)}(X):= (\pi_X^{2i-j})_\ast \CH^i(X) \ ,\]
    one obtains a bigraded ring structure on the Chow ring. That is, the intersection product sends $\CH^i_{(j)}(X)\otimes \CH^{i^\prime}_{(j^\prime)}(X) $ to  $\CH^{i+i^\prime}_{(j+j^\prime)}(X)$.
    
      It is a reasonable expectation that for any $X$ with an MCK decomposition, one has
    \[ \CH^i_{(j)}(X)\stackrel{??}{=}0\ \ \ \hbox{for}\ j<0\ ,\ \ \ \CH^i_{(0)}(X)\cap \CH^i_{hom}(X)\stackrel{??}{=}0\ .\]
    This is strictly related to Murre's conjectures B and D, that were formulated for any CK decomposition \cite{Mur}. While these questions are open in general, here is a partial result:
    %\Robert{I moved this general fact here; seems more logical.}
    
    \begin{lm}\label{injection}
Let $X$ be a smooth projective variety of dimension $n$ with an MCK decomposition $\{\pi^j_X\}$. 
Assume $H^3(X,\Q)=0$, and assume also the following support condition is satisfied:
  \begin{equation}\label{suppcond}  \pi^0_X \ \hbox{and}\ \pi^2_X\ \hbox{are\ supported\ on}\ x\times X\ \hbox{resp.}\ S\times X\ ,\end{equation}
  where $x\in X$ is a point and $S\subset X$ is a surface.
  
Then $\CH^i_{(0)}(X)$ injects into cohomology via the cycle class map for $i\geq n-1$.
\end{lm}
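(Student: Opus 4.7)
The plan is to split the statement into the two nontrivial cases $i = n$ and $i = n-1$; for $i > n$ the group $\CH^i(X)$ itself vanishes. For $i = n$, after replacing the MCK decomposition by a self-dual representative, $\pi^{2n}_X$ is the transpose of $\pi^0_X$ and is therefore supported on $X \times \{x\}$. Consequently $\CH^n_{(0)}(X) = (\pi^{2n}_X)_*\CH^n(X)$ is the $\Q$-span of $[x]$, and visibly injects into $H^{2n}(X,\Q)$.

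For $i = n-1$, I would likewise arrange that $\pi^{2n-2}_X = {}^t\pi^2_X$ is supported on $X \times S$. Picking a desingularization $\mu : \tilde S \to S$ and setting $j := \iota\circ\mu : \tilde S \to X$, the correspondence $\pi^{2n-2}_X$ lifts to a cycle $\tilde\pi \in \CH^2(X \times \tilde S)$ and the action on Chow groups factors as
\[
\CH^{n-1}(X) \xrightarrow{\ \phi\ } \CH^1(\tilde S) \xrightarrow{\ j_*\ } \CH^{n-1}(X),
\]
so that every $\alpha \in \CH^{n-1}_{(0)}(X)$ has the form $\alpha = j_*\tilde\alpha$ for some $\tilde\alpha \in \CH^1(\tilde S) = \Pic(\tilde S)_\Q$. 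Assuming $\alpha$ is homologically trivial on $X$, I would decompose $\tilde\alpha = \tilde\alpha_{0} + \tilde\alpha_{\mathrm{NS}}$ along the short exact sequence $0 \to \Pic^0(\tilde S)_\Q \to \Pic(\tilde S)_\Q \to \mathrm{NS}(\tilde S)_\Q \to 0$ and show that each piece pushes forward to zero in $\CH^{n-1}(X)$. The $\mathrm{NS}$-part is handled by combining Lefschetz $(1,1)$ on $\tilde S$ with the idempotence relation $(j_*\phi)^2 = j_*\phi$, which forces classes in $\ker\bigl(j_*\colon H^2(\tilde S,\Q) \to H^{2n-2}(X,\Q)\bigr)$ to be annihilated by the projector. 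The $\Pic^0$-part uses the hypothesis $H^3(X,\Q) = 0$: by Poincaré duality $H^{2n-3}(X,\Q) = 0$, hence the Griffiths intermediate Jacobian $J^{n-1}(X)$ vanishes, killing the Abel--Jacobi invariant of $j_*\tilde\alpha_0$.

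The principal obstacle I foresee is upgrading \emph{Abel--Jacobi trivial} to \emph{rationally trivial} for the $\Pic^0$-piece: the vanishing $J^{n-1}(X) = 0$ alone is not sufficient, and one must combine it with the rigidity coming from the MCK idempotent being concentrated on $X \times S$ (via a Bloch--Srinivas-style decomposition-of-the-diagonal argument applied on the factor $\tilde S$) in order to conclude that $j_*\tilde\alpha_0 = 0$ in $\CH^{n-1}(X)$. The remaining steps are essentially formal manipulations of correspondences, using only the support condition and the idempotence property of $\pi^{2n-2}_X$.
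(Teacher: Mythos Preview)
Your treatment of $i=n$ is fine and matches the paper. For $i=n-1$, however, you have manufactured an obstacle that does not exist, and in doing so you have obscured the one-line argument that actually finishes the proof.

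The point you are missing is that on \emph{any} smooth projective variety the Abel--Jacobi map on divisors is injective, i.e.\ $\CH^1_{AJ}(\tilde S)=0$. Combine this with the (standard) fact that correspondences respect Abel--Jacobi equivalence. Since $H^{2n-3}(X,\Q)=0$ forces $J^{n-1}(X)\otimes\Q=0$, every $\alpha\in\CH^{n-1}_{(0)}(X)\cap\CH^{n-1}_{hom}(X)$ is already AJ-trivial on $X$; hence $\phi(\alpha)\in\CH^1_{AJ}(\tilde S)=0$, and therefore $\alpha=(\pi^{2n-2}_X)_*\alpha=j_*\phi(\alpha)=0$. This is exactly how the paper argues: the action of $\pi^{2n-2}_X$ on $\CH^{n-1}_{AJ}(X)$ factors through $\CH^1_{AJ}(\tilde S)=0$, so it is the zero map. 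No decomposition $\tilde\alpha=\tilde\alpha_0+\tilde\alpha_{\mathrm{NS}}$, no separate handling of the two pieces, and certainly no Bloch--Srinivas-type input is needed.

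Your proposed route, by contrast, pushes the $\Pic^0$-part forward to $X$ and then tries to kill $j_*\tilde\alpha_0$ there; on $X$ there is no reason for AJ-trivial codimension-$(n-1)$ cycles to be rationally trivial, which is precisely the ``obstacle'' you run into. The fix is not a decomposition-of-the-diagonal argument but simply to stay on $\tilde S$, where the vanishing is automatic. Likewise, your sketch for the $\mathrm{NS}$-part (``idempotence forces classes in $\ker(j_*)$ to be annihilated'') is vague: the idempotent acts on $\CH^{n-1}(X)$, not on $\CH^1(\tilde S)$, and it is not clear how you intend to transport the relation back to $\tilde S$. All of this evaporates once you use $\CH^1_{AJ}(\tilde S)=0$ directly.
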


\begin{proof} By self-duality, the support condition implies that $\pi^{2n}_X$ and $\pi^{2n-2}_X$ are supported on $X\times x$, resp. on $X\times S$. 

By definition, $\pi^{2n}_X$ acts as the identity on $\CH^n_{(0)}(X)$. On the other hand, the action of $\pi^{2n}_X$ on $\CH^n_{}(X)$ factors over $\CH^0(x)=\Q$ and so 
  \[    (\pi^{2n}_X)_\ast=0\ \colon\ \ \CH^n_{hom}(X)\ \to\ \CH^n(X)\ .\]
  It follows that
    \[  \CH^n_{(0)}(X)   \cap   \CH^n_{hom}(X)=0\ \]
    as claimed.
    
 The argument for $i=n-1$ is similar. First of all, the assumption on $H^3(X,\Q)$ implies that $\CH^{n-1}_{hom}(X)=\CH^{n-1}_{AJ}(X)$. Now
  the projector $\pi^{2n-2}_X$ acts as the identity on $\CH^{n-1}_{(0)}(X)$. On the other hand, the action of $\pi^{2n-2}_X$ on $\CH^{n-1}_{hom}(X)=\CH^{n-1}_{AJ}(X)$ factors over $\CH^1_{AJ}(S)=0$ and so 
  \[    (\pi^{2n-2}_X)_\ast=0\ \colon\ \ \CH^{n-1}_{hom}(X)\ \to\ \CH^{n-1}(X)\ .\]
  It follows that
    \[  \CH^{n-1}_{(0)}(X)   \cap   \CH^{n-1}_{hom}(X)=0\ \]
    as claimed.
%This is stated in \cite[Introduction]{V6}; a proof can be found in \cite[Lemma 2.20]{23.6}.
\end{proof}

\begin{rk} The support condition \eqref{suppcond} is verified by all MCK decompositions that have been constructed. In general, this condition is verified provided there are no ``phantom motives'' (i.e. Chow motives whose cohomological realization is zero); in particular, condition \eqref{suppcond} holds for $X$ when $X$ is Kimura finite-dimensional.
\end{rk}

\begin{thm}\label{mckcubic}\cite{Diaz,FLV2}
All smooth cubic hypersurfaces of any dimension have an MCK decomposition.
\end{thm}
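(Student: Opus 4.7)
The plan is to exhibit an explicit Chow--K\"unneth decomposition coming from the hyperplane class, and then upgrade it to an MCK decomposition via a spreading/Franchetta-type argument over the parameter space of cubics. Let $X \subset \P^{n+1}$ be a smooth cubic $n$-fold, with hyperplane class $h \in \CH^1(X)$ satisfying $h^n = 3 \cdot [\mathrm{pt}]$. Since $H^\ast(X,\Q)$ is spanned by powers of $h$ in every degree different from $n$, one can write down explicit K\"unneth projectors $\pi^{2k}_X$ for $2k \neq n$ as suitably normalized products $c_k\, h^{n-k} \times h^k$, and set $\pi^n_X := \Delta_X - \sum_{i \neq n} \pi^i_X$. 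Idempotence and mutual orthogonality then follow from the degree relation by direct computation, yielding the CK decomposition.

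For multiplicativity, one must verify the vanishing \eqref{vani} for $\pi^k_X \circ \Delta_X^{sm} \circ (\pi^i_X \times \pi^j_X)$ with $i+j\neq k$ as an identity in $\CH(X^3)$. By Remark \ref{rem:mck} each such identity holds modulo homological equivalence, so the task is to lift it to rational equivalence. The natural vehicle is the universal family $\sX \to B$ of smooth cubic $n$-folds together with its relative triple fiber product $\sX \times_B \sX \times_B \sX$. If one establishes that on this triple product every fiberwise homologically trivial cycle of the relevant codimension is actually rationally trivial (a form of the generalized Franchetta conjecture), the required identities on each fiber $X_b$ follow from their cohomological counterparts, producing the sought MCK decomposition.

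The cubic-specific step is precisely this Franchetta-type property. Here I would exploit the Fano variety of lines $F(X)$ and the universal incidence correspondence $P \subset X \times F(X)$: the varieties $F(X)$ carry their own (M)CK structure, which is particularly tractable in low dimension (for $n=4$, $F(X)$ is hyperk\"ahler and an MCK decomposition is already known), and Galkin--Shinder-type motivic identities expressing $h(X\times X)$ in terms of $h(F(X))$ and $h(X)$ yield the small-diagonal decomposition controlling \eqref{vani}. Low dimensions can be handled directly (cubic surfaces are rational; cubic threefolds are governed by the Prym/intermediate Jacobian motive), and the general case proceeds by induction on $n$ via linear hyperplane sections. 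The main obstacle is precisely the Franchetta-style vanishing on triple self-products of the universal family in the relevant codimensions, which requires a delicate stratification and spreading analysis; this is where the specialized techniques of \cite{Diaz, FLV2} are essential.
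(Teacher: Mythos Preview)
The paper does not supply its own proof of this theorem: it is quoted as a known result with citations to \cite{Diaz} and \cite{FLV2}, and is used later as a black box (e.g.\ in the treatment of family C-1). So there is no ``paper's proof'' to compare against beyond those references.

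As a sketch of what happens in \cite{Diaz,FLV2}, your outline is partly on target but conflates two genuinely different arguments. The first two paragraphs are an accurate summary of the \cite{FLV2} strategy: the projectors $\pi^{2k}_X = c_k\, h^{n-k}\times h^k$ (and $\pi^n_X$ the complement) are generically defined over the parameter space of cubics, the MCK identities \eqref{vani} hold in cohomology, and the Franchetta property for $\sX\times_B\sX\times_B\sX$ upgrades them to rational equivalence. That Franchetta input is the whole content of the argument, and you correctly flag it as the main obstacle.

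Your third paragraph, however, drifts. Diaz's proof in \cite{Diaz} does not go via Galkin--Shinder identities or an MCK decomposition on $F(X)$; rather it uses (and generalizes to all dimensions) Voisin's explicit relation expressing $\Delta_X^{sm}$ in $\CH(X^3)$ in terms of cycles built from the incidence correspondence with the Fano variety of lines and powers of $h$, and then checks multiplicativity by a direct computation with that relation. Invoking an MCK on $F(X)$ would be close to circular (for $n=4$ the Shen--Vial MCK on $F(X)$ is logically independent, but it is not the mechanism in \cite{Diaz}), and there is no induction on $n$ via hyperplane sections in either reference. So: your Franchetta paragraph is the right skeleton for \cite{FLV2}; the Fano-variety paragraph mischaracterizes \cite{Diaz} and should be replaced by the small-diagonal relation argument if you want to sketch that route.
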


   \begin{rk}   
  Having an MCK decomposition is a very restrictive property, and it is related to Beauville's ``splitting property conjecture'' \cite{Beau3}. 
  Let us just produce a short list of examples: hyperelliptic curves have an MCK decomposition \cite[Example 8.16]{SV}, but the very general curve of genus $\ge 3$ does not \cite[Example 2.3]{FLV2}. In dimension two, a smooth quartic surface in $\P^3$ has an MCK decomposition, but a very general surface of degree $ \ge 7$ in $\P^3$ should not have one \cite[Proposition 3.4]{FLV2}.
For more details, and other examples of varieties with an MCK decomposition, the reader may check \cite[Section 8]{SV}, as well as \cite{V6}, \cite{SV2}, \cite{FTV}, \cite{37}, \cite{38}, \cite{39}, \cite{40}, \cite{44}, \cite{45}, \cite{46}, \cite{48}, \cite{55}, \cite{FLV2}, \cite{g8}, \cite{59}, \cite{60}.
   \end{rk}

\subsection{Franchetta property}

The Franchetta property is often strictly related to MCK decompositions (see for example \cite{FLV2}).

\begin{defn}\label{frank} Let $\pi\colon\sY\to \sB$ be a smooth projective morphism, where $\sY, \sB$ are smooth quasi-projective varieties. For any $b\in\sB$, we write $Y_b$ for the fiber $\pi^{-1}(b)$.
One says that $\sY\to \sB$ has the {\em Franchetta property in codimension $j$\/} if the following property holds:

\smallskip

 for every $\Gamma\in \CH^j(\sY)$ such that the restriction $\Gamma\vert_{Y_b}$ to the fiber is homologically trivial for all $b\in \sB$, the restriction $\Gamma\vert_{Y_b}$ is zero in $\CH^j(Y_b)$ for all $b\in \sB$.
 
 We will say that $\sY\to \sB$ has the {\em Franchetta property\/} if $\sY\to \sB$ has the Franchetta property in codimension $j$ for all $j$.
 \end{defn}
 
 This property has been studied in \cite{BL}, \cite{FLV}, \cite{FLV2}, \cite{FLV3}, \cite{BL22}.
 
 \begin{defn}\label{def:gd} Given a family $\sY\to \sB$ with the properties described above, and given $Y:=Y_b$ a fiber, we write
   \[ \GDCH^j_\sB(Y):=\Im\Bigl( 
  \CH^j(\sY)\to \CH^j(Y)\Bigr) \]
   for the subgroup of {\em generically defined cycles}. 
  Whenever it is clear to which family we are referring, we will often suppress the index $\sB$ from the notation.
  \end{defn}
  
 We remark that, with this notation, the Franchetta property is equivalent to saying that $\GDCH^\ast_\sB(Y)$ injects into cohomology, under the cycle class map.
 
\smallskip 
 
The Franchetta property for a family $\sX \to B$ does not imply and is not implied by the Franchetta property for a subfamily $\sX^\prime \to B^\prime$, where $B^\prime$ is a closed subscheme of $B$. If $B^\prime \to B$ is a dominant morphism, the Franchetta property for the base-changed family $\sX_{B^\prime} \to B^\prime$ implies the property for $\sX \to B$ \cite[Remark 2.6]{FLV}.

\smallskip 
 
  The next lemma describes the behaviour of the Franchetta property under blow-ups.
 
\begin{lm}\label{bufrank}
Let $\sX\to B$ be a family of smooth projective varieties (with fiber $X$), and $\sY\to B$ a family of smooth subvarieties. Let $\tilde{\sX}\to B$ denote the blow-up of $\sX$ along $\sY$.  Then $\sX\to B$ and $\sY\to B$ have the Franchetta property if and only if $\tilde{\sX}$ has the Franchetta property with respect to $B$.
\end{lm}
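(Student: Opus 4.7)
The plan is to apply the classical blow-up formula for Chow groups of Manin--Grothendieck, which works equally well in the family setting and is compatible with the cycle class map in cohomology. Let $p\colon\tilde{\sX}\to \sX$ denote the family blow-up, $\sE=\P(N_{\sY/\sX})$ the exceptional divisor with inclusion $j\colon\sE\hookrightarrow\tilde{\sX}$ and projective bundle map $q\colon\sE\to\sY$, and let $\xi=c_1(\sO_\sE(1))$. Writing $r$ for the codimension of $\sY$ in $\sX$, the blow-up formula gives a canonical isomorphism
\[
\CH^k(\tilde{\sX})\ \cong\ p^*\CH^k(\sX)\ \oplus\ \bigoplus_{i=0}^{r-2} j_*\bigl(\xi^i\cdot q^*\CH^{k-1-i}(\sY)\bigr)\ ,
\]
and the same decomposition holds fiberwise on each $\tilde X_b$, as well as in singular cohomology. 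In particular, both $p^*$ and $\alpha\mapsto j_*(\xi^i\cdot q^*\alpha)$ are split injections both on Chow groups and on cohomology, and they commute with restriction to fibers.

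For the direction $(\Rightarrow)$, assume that $\sX\to B$ and $\sY\to B$ have the Franchetta property. Let $\tilde\Gamma\in\CH^*(\tilde{\sX})$ be fiberwise homologically trivial. Applying the blow-up formula at the family level, we can write
\[
\tilde\Gamma\ =\ p^*\alpha+\sum_{i=0}^{r-2} j_*\bigl(\xi^i\cdot q^*\beta_i\bigr)
\]
for some $\alpha\in\CH^*(\sX)$ and $\beta_i\in\CH^*(\sY)$; by construction $\alpha$ and $\beta_i$ are generically defined. Restricting to a fiber $\tilde X_b$ and using that the decomposition is fiberwise and compatible with the cycle class map, the cohomological triviality of $\tilde\Gamma|_{\tilde X_b}$ forces $\alpha|_{X_b}$ and each $\beta_i|_{Y_b}$ to be homologically trivial. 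The Franchetta hypothesis then gives $\alpha|_{X_b}=0$ and $\beta_i|_{Y_b}=0$ in their respective Chow groups, hence $\tilde\Gamma|_{\tilde X_b}=0$.

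For the direction $(\Leftarrow)$, assume that $\tilde{\sX}\to B$ has the Franchetta property. Given a generically defined cycle $\Gamma$ on $\sX$ that is fiberwise homologically trivial, the cycle $p^*\Gamma\in\CH^*(\tilde{\sX})$ is generically defined and fiberwise homologically trivial, so the Franchetta property for $\tilde{\sX}$ yields $p^*\Gamma|_{\tilde X_b}=0$; applying $p_{b*}$ and using $p_{b*}p_b^*=\id$ gives $\Gamma|_{X_b}=0$. Similarly, given $\Gamma\in\CH^*(\sY)$ generically defined and fiberwise homologically trivial, the cycle $j_*(q^*\Gamma)\in\CH^*(\tilde{\sX})$ is generically defined and fiberwise homologically trivial (pullbacks and proper pushforwards preserve homological triviality); the Franchetta property for $\tilde{\sX}$ gives $j_{b*}(q_b^*\Gamma|_{Y_b})=0$ in $\CH^*(\tilde X_b)$, and since $\alpha\mapsto j_{b*}q_b^*\alpha$ is injective (being a direct summand in the blow-up formula), we conclude $\Gamma|_{Y_b}=0$.

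The only point requiring genuine care is the first one: the family version of the blow-up decomposition and the fact that the decomposition of a generically defined cycle produces generically defined components. This is standard once one observes that the projective bundle theorem for $\sE\to\sY$ holds because $\sE$ is a Zariski-locally trivial $\P^{r-1}$-bundle, and that the exact sequence (cf.\ \cite[\S 9.3]{F})
\[
\CH^{*-1}(\sE)\ \to\ \CH^*(\sX)\oplus\CH^*(\tilde{\sX})\ \to\ \CH^*(\sY)\ \to\ 0
\]
is functorial, so it matches the analogous sequence for each fiber.
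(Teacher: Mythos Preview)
Your proof is correct and takes essentially the same route as the paper: both arguments rest on the blow-up formula for Chow groups and its compatibility with the cycle class map and with restriction to fibers. The paper packages this via the Mayer--Vietoris short exact sequence $0\to\CH^*(X)\to\CH^*(\tilde X)\oplus\CH^*(Y)\to\CH^*(E)\to 0$ and diagram-chases, whereas you use the equivalent direct-sum (Manin) decomposition; the content is the same.

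One minor correction: the displayed exact sequence in your final paragraph is garbled. The sequence you want (cf.\ \cite[Proposition~6.7]{F}, or the version used in the paper) is
\[
0\ \to\ \CH^*(\sX)\ \to\ \CH^*(\tilde{\sX})\oplus\CH^*(\sY)\ \to\ \CH^*(\sE)\ \to\ 0,
\]
not the one you wrote with $\sY$ on the right. This does not affect your argument, since you only use the direct-sum decomposition anyway, but it should be fixed.
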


\begin{proof}
(IF direction) Let $E\subset \tilde{X}$ be the exceptional divisor of $Y$. Suppose that we have a family of diagrams over a scheme ${B}$:

\[
   \xymatrix{ \mathcal{E}\ar@{^{(}->}[r]\ar[d]   & \tilde{\sX} \ar[d] \\
\mathcal{Y}\ar@{^{(}->}[r]\ar[dr] & \sX\ar[d] \\
 & {B}, }  
 \]

where $\mathcal{E}$ is the exceptional divisor over $\sY$. That is, the varieties $X,\ Y,\ E$ and $\tilde{X}$ all deform along the same base scheme.\\

 The blow-up construction gives rise to a "Mayer-Vietoris" short exact sequence both in cohomology and in the Chow ring. If we denote by $cl: \mathrm{CH}^*(X)\to \mathrm{H}^*(X)$ the cycle class map, then we have a commutative diagram:

\[
\xymatrix{
0 \ar[r] & \CH^*(X)\ar[r]^(.4){t^{CH}}\ar[d]^{cl} & \CH^*(\tilde{X}) \oplus \CH^*(Y)\ar[d]^{cl}\ar[r]^(.7){v^{CH}}   & \CH^*(E)\ar[d]^{cl}\ar[r] & 0 \\
0 \ar[r] & \rm H^*(X)\ar[r]^(.4){t^{H}} & \rm H^*(\tilde{X}) \oplus \rm H^*(Y)\ar[r]^(.6){v^{H}}  & \rm H^*(E)\ar[r] & 0, } 
\]
where the first map is just pull-back, and the second is the difference of pullbacks. There exists an analogous commutative diagram on the level of the families $\sX$, $\sY$, $\wt{\sX}$.
The Franchetta property for $\tilde{X}$ is equivalent to $\GDCH^\ast_B(\tilde{X})$ injecting in cohomology. Consider now $\alpha \in \GDCH^\ast_B(X)$ such that $cl(\alpha)=0$, hence $t^H(cl(\alpha))=(0,0)\in H^*(\tilde{X})\oplus H^*(Y)$. We observe that $t^{CH}(\alpha)$ is also generically defined. By Franchetta for $\tilde{X}$ this means that $t^{CH}(\alpha)$ must be of the form $(0,\omega)$, for $\omega\in \rm A^*(Y)$. But the only cycle of this type in the image of $t^{CH}$ (which is the pullback) is $(0,0)$, and by injectivity of $t^{CH}$, we have $\alpha=0$. Hence the Franchetta property holds for $X$. Since the above exact sequences are all split, the Franchetta property for $E$ is implied by the Franchetta property for $\wt{X}$. The projective bundle formula then implies the same property for $Y$.

\medskip

(ONLY IF direction) We assume that both $X$ and $Y$ have the Franchetta property. First of all it is not hard to observe that the Franchetta property holds for $Y$ if and only if if holds for its exceptional divisor $E$. Now suppose that we have a generically defined cycle $\gamma \in \rm A^*(\tilde{X})$ such that $cl(\gamma)=0$. The cycle $(\gamma, 0)\in \rm A^*(\tilde{X})\oplus \rm A^*(Y)$ is sent to $(0,0)$ by the cycle class map. We remark that $v^{CH}(\gamma,0)$ is also generically defined. Since $v^H(cl(\gamma,0))=0$, by the Franchetta property of $E$, we have that $v^{CH}(\gamma,0)=0$. By the exactness of the sequence this means that $(\gamma,0)=t^{CH}(\sigma)$, for some $\sigma \in \rm A^*(X)$, which is certainly generically defined (since the exact sequences are split). But $t^H(cl(\sigma))=(0,0)$ and $t^H$ is injective, hence by the Franchetta property for $X$ we find $\sigma=0$ and $\gamma=0$ as well.
\end{proof}

We stress the fact that in Lemma \ref{bufrank}, it is essential to consider the Franchetta property {\em with respect to a common base $B$} for the 3 varieties $\wt{X}$, $X$ and $Y$.
This fact makes Lemma \ref{bufrank} difficult to apply in practice. For example, in many cases in \cite{B+} the variety $\wt{X}$ is obtained as section of some vector bundle $F$ on a product $M=M_1\times M_2$, and the projection $M\to M_1$ exhibits $\wt{X}$ as the blow-up of a cubic fourfold $X$ with center a rational surface $Y\subset X$. In this setting, the common base
$B$ is given by sections of $F$ on $M$, and it is not a priori clear that the cubic fourfolds $X$ verify Franchetta with respect to this base $B$.

To deal with this complication, we were led to develop some technical results (Lemma \ref{bufrank1} and Proposition \ref{frk3} below), that are tailor-made to fit the constructions of \cite{B+}.

%\begin{lm}\label{bufrank2} Let $M=M_1\times M_2$ where $M_1, M_2$ are smooth projective varieties, and $E\to M$ a globally generated vector bundle.
%Let 
%  \[ \wt{\sX}\ \to\  B\subset \bar{B}:= \P H^0(M, E)\]
%  be the family obtained as smooth dimensionally transverse sections of $E$. Assume that the projection $M\to M_1$ induces a morphism
%  \[  p\colon\ \ \wt{\sX}\ \to\ \sX\ \]
%  as in Lemma \ref{bufrank1}, and assume that the family $\sY\to B$ can be described as a family of sections of some vector bundle $E_1\to M_1$.
%  
%  Assume that $\sX\to B$ has the Franchetta property, and that
%  \[ \Im\Bigl( \CH^\ast(M_1)\to \CH^\ast(Y)\Bigr) \ \to\ H^\ast(Y,\Q) \]
%   is injective, for every fiber $Y$ of $\sY\to B$. Then $\sY\to B$ and $\wt{\sX}\to B$ have the Franchetta property.
%  \end{lm}
%  
%  \begin{proof} 
%  
%  \end{proof}

\subsection{The Franchetta property for K3 surfaces}

In this section we collect known results for K3 surfaces, that will be useful in the rest of the paper. Thanks to Pavic-Shen-Yin \cite{PSY} and Fu and the second named author \cite{FL}, we have the following:

\begin{thm}
The Franchetta property holds for K3 surfaces of genus $g$ with $2\leq g \leq 10$ and for $g=12,13,14,16,18,20$.
\end{thm}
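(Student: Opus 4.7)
The plan is to follow the Mukai-model strategy of \cite{PSY} and \cite{FL}. For each genus $g$ in the list, a classical theorem of Mukai exhibits the general polarized K3 surface $(S,H)$ of genus $g$ as the zero locus of a regular section of a homogeneous vector bundle $F_g$ on a fixed rational homogeneous variety $P_g$ (for example $P_3=\P^3$ with $F_3=\sO(4)$ for $g=3$, or appropriate Grassmannians for $g=6,7,8,\ldots$). Let $B$ be an open subset of the space of such sections parametrizing smooth K3 surfaces, and let $\sS\to B$ denote the resulting universal family.

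First I would identify $\GDCH^\ast_B(S)$ with the \emph{tautological subring} $R^\ast(S)\subset \CH^\ast(S)$ generated by the polarization $H$ and the Chern classes $c_i(S)$. The inclusion $R^\ast(S)\subseteq \GDCH^\ast_B(S)$ is immediate, since both $H$ and $c_i(S)$ spread out over the family. For the reverse inclusion, a generically defined class on $S$ is the restriction of a class on the total space $\sS$, and $\sS$ sits naturally as a subvariety of $B\times P_g$; pulling back from $P_g$ and invoking the fact that $\CH^\ast(P_g)$ is generated by Schubert cycles (since $P_g$ is rational homogeneous), one concludes that every generically defined class on $S$ is a polynomial in $H$ and the restrictions of Schubert classes. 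Via the adjunction relation $c(T_S)=c(T_{P_g})|_S/c(F_g)|_S$, these restrictions are in turn polynomials in $H$ and in the Chern classes $c_i(S)$.

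The second step is to show that $R^\ast(S)$ injects into $H^\ast(S,\Q)$ under the cycle class map. In codimensions $0$ and $1$ there is nothing to prove. In codimension $2$, the Beauville--Voisin theorem (Theorem \ref{bv}) asserts that every element of $R^2(S)$ lies in the one-dimensional subspace $\Q[c_S]$, where $c_S$ is the Beauville--Voisin canonical $0$-cycle; since the cycle class of $c_S$ is a nonzero multiple of the class of a point, the required injectivity follows at once.

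The main obstacle is the tautological identification $\GDCH^\ast_B(S)=R^\ast(S)$, which has to be verified case by case as the ambient $P_g$ varies. For $g\le 10$ this is carried out in \cite{PSY} using the explicit Mukai models. The extension to the further genera $g=12,13,14,16,18,20$ (where $P_g$ is more elaborate, being in general a symplectic or orthogonal Grassmannian) is addressed in \cite{FL} through a more delicate analysis of the tautological classes together with a spread argument on the parameter space of sections of $F_g$.
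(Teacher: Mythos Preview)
Your overall strategy matches the approach sketched in the paper's remark following the theorem (which in turn summarizes \cite{PSY}): first reduce $\GDCH^2_B(S)$ to the image of $\CH^2$ of the Mukai ambient space, then show this image injects into $H^4(S,\Q)$. Your second step, via the Beauville--Voisin theorem, is fine.

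The first step, however, has a genuine gap. You write that ``$\sS$ sits naturally as a subvariety of $B\times P_g$; pulling back from $P_g$ \ldots\ one concludes that every generically defined class on $S$ is a polynomial in $H$ and the restrictions of Schubert classes.'' But a class on $\sS$ is not a priori a pullback from $P_g$, nor even the restriction of a class on $B\times P_g$: the closed embedding $\sS\hookrightarrow B\times P_g$ gives you no control whatsoever over $\CH^*(\sS)$. What is actually needed---and this is precisely the ``projective bundle argument'' flagged in the paper's remark---is the observation that the second projection $\sS\to P_g$ is (an open piece of) a projective bundle: since $F_g$ is globally generated, the sections vanishing at a fixed point of $P_g$ form a linear subspace of constant codimension in $H^0(P_g,F_g)$. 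The projective bundle formula then expresses $\CH^*(\sS)$ as a free $\CH^*(P_g)$-module on powers of a hyperplane class coming from $B$; restricting to a fiber $S_b$ kills the latter and yields $\GDCH^*_B(S)\subset\Im\bigl(\CH^*(P_g)\to\CH^*(S)\bigr)$. Without this step your argument does not go through.

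Two smaller points. Your detour through the tautological ring $R^*(S)$ is both unnecessary and not fully justified: on a Grassmannian ambient the codimension-$2$ Schubert classes need not be polynomials in the Pl\"ucker hyperplane class, so your adjunction argument for $\Im\bigl(\CH^2(P_g)\to\CH^2(S)\bigr)\subset R^2(S)$ is too quick; it is cleaner, following \cite{PSY}, to show directly that this image lies in $\Q[c_S]$. Also, your attribution is slightly off: \cite{PSY} already treats $g\le 10$ together with $g\in\{12,13,16,18,20\}$, while \cite{FL} supplies the remaining case $g=14$.
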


\begin{rk} Let $S$ be a K3 surface considered in \cite{PSY} (i.e. the genus $g$ of $S$ is in between 2 and 10, or $g=12,13,16,18,20$), embedded in some ambient space $W_g$ (these are the ambient spaces of the Mukai model of the K3 surface \cite{M1}, \cite{M2}, \cite{M3}, \cite{M4}, cf. the list in \cite{PSY}). Let moreover $\sF_g$ be the moduli stack of these K3 surfaces. The idea in \cite{PSY} consists in a two-step argument. First, through a projective bundle argument, they show that there is an injection
  \[  \GDCH^2_{\sF_g}(S)\ \hookrightarrow\ \ima \bigl( \CH^2(W_g)\to \CH^2(S)\bigr)\ .\]
  Secondly, they show that there is an injection
  \begin{equation}\label{injpsy}  
\ima \bigl( \CH^2(W_g)\to \CH^2(S)\bigr)\ \hookrightarrow\ H^4(S,\Q)\ ,\end{equation}
given by the cycle class map.
\end{rk}

\subsection{Varieties with trivial Chow groups} The following is well-known; we recall it for convenience:

\begin{lm}\label{triv} Let $M$ be a smooth projective variety. The following are equivalent:

\noindent
(i) The motive of $M$ is of Tate type: 
    \[ h(M)\cong \bigoplus \one(\ast)\ \ \ \hbox{in}\ \sM_{\rm rat}\ ;\]

\noindent
(ii) The cycle class map induces an isomorphism $\CH^\ast(M)\cong H^\ast(M,\Q)$\,;

\noindent
(iii) $\CH^\ast_{\rm hom}(M)=0$\,;

\noindent
(iv) $\CH^\ast(M)$ is a finite-dimensional $\Q$-vector space;

\noindent
(v) The natural map $\CH^\ast(M)\otimes \CH^\ast(M)\to \CH^\ast(M\times M)$ is an isomorphism.
\end{lm}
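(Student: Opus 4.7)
The plan is to establish the five equivalences by a cyclic chain of implications, with (v) inserted in parallel. The implications (i) $\Rightarrow$ (ii) and (i) $\Rightarrow$ (v) are purely formal: both $\CH^\ast(\one(n))$ and $H^\ast(\one(n),\Q)$ are one-dimensional in the correct degree (and zero otherwise), the cycle class map restricts to the identity on each Tate summand, and Chow groups and cohomology commute with direct sums of motives; the same reasoning applied to $h(M\times M)\cong h(M)\otimes h(M)$ yields (v). Moreover (ii) $\Rightarrow$ (iii) is by definition of $\CH^\ast_{\rm hom}$ as the kernel of the cycle class map, and (iii) $\Rightarrow$ (iv) follows because injectivity of the cycle class map embeds $\CH^\ast(M)$ into the finite-dimensional $H^\ast(M,\Q)$.

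The core of the argument is (iv) $\Rightarrow$ (i), for which I would invoke the Bloch--Srinivas decomposition of the diagonal. Finite-dimensionality of $\CH_0(M)$ forces this group to be supported on a $0$-dimensional subscheme, so Bloch--Srinivas yields a decomposition $\Delta_M=\Gamma_1+\Gamma_2$ in $\CH^n(M\times M)$ with $\Gamma_1$ supported on $M\times\{\text{pt}\}$ and $\Gamma_2$ supported on $D\times M$ for a divisor $D$. Iterating this argument inductively in each codimension, and using the finite-dimensionality of every $\CH^i(M)$ given by (iv), produces a complete decomposition $\Delta_M=\sum_k a_k\times b_k$ with $a_k,b_k\in\CH^\ast(M)$. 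After orthogonalizing via dual bases with respect to the intersection pairing (non-degenerate by Poincar\'e duality, once we know the cycle class map is injective), these products become mutually orthogonal idempotents which carve out Tate summands $\one(r_k)$ of $h(M)$, giving (i). The same reasoning applied directly to the decomposition of $\Delta_M$ provided by $\CH^\ast(M\times M)=\CH^\ast(M)\otimes\CH^\ast(M)$ also yields (v) $\Rightarrow$ (i), closing all loops.

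The main obstacle I expect is the careful iterated application of Bloch--Srinivas across all codimensions, and verifying that the resulting decomposition of the diagonal is indeed of the form $\sum a_k\times b_k$ (a tensor-product decomposition) rather than a more complicated sum of cycles supported on subvarieties of mixed dimensions. Once this technical point is handled and an orthogonal system of idempotents is extracted, the Tate motivic decomposition claimed in (i) follows immediately, and the implications to (ii)--(v) become routine.
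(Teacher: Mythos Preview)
Your outline is correct in spirit and the easy implications match the paper exactly. The two places where your argument differs from the paper's are worth noting.

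For (iv)$\Rightarrow$(i), the paper simply cites Kimura and Vial rather than reproving it; what you sketch via iterated Bloch--Srinivas is essentially the content behind those references, and you have correctly flagged the delicate step (getting a genuine tensor decomposition $\Delta_M=\sum a_k\times b_k$ rather than merely a decomposition supported on products of subvarieties). One small warning about your orthogonalization step: you invoke non-degeneracy of the intersection pairing ``once we know the cycle class map is injective'', but at this point in the chain you only have (iv), not (iii). The fix is easy---once the diagonal is written as $\sum a_k\times b_k$, letting it act on $\CH^\ast_{\rm hom}(M)$ already forces $\CH^\ast_{\rm hom}(M)=0$ (each summand acts via a degree map, which kills homologically trivial classes), so you recover (iii) first and then orthogonalize---but you should make this order explicit.

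For the link with (v), the paper takes a shorter route than you do: instead of (v)$\Rightarrow$(i), it proves (v)$\Rightarrow$(iv). The point is that (v) immediately writes $\Delta_M=\sum_{j=1}^r \alpha_j\times\beta_j$, and letting this act on $\CH^\ast(M)$ shows the identity factors through an $r$-dimensional $\Q$-vector space, giving (iv) in one line. This bypasses the orthogonalization entirely and feeds back into the already-established (iv)$\Rightarrow$(i). Your (v)$\Rightarrow$(i) is not wrong, just less economical.
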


\begin{proof} The implications (i)$\Rightarrow$(ii)$\Rightarrow$(iii)$\Rightarrow$(iv) are obvious. The implication (iv)$\Rightarrow$(i) is \cite{Kim2} or \cite{V10}. The implication (i)$\Rightarrow$(5) follows readily from the fact that $h(M\times M)=h(M)\otimes h(M)$ and $\one(\ell)\otimes\one(m)=\one(\ell+m)$. Finally, to see that (v)$\Rightarrow$(iv), one notes that (v) implies the decomposition of the diagonal
  \[ \Delta_M=\sum_{j=1}^r \alpha_j\times \beta_j\ \ \ \hbox{in}\ \CH^{\dim M}(M\times M)\ ,\]
  where $\alpha_j,\beta_j\in \CH^\ast(M)$.
 Letting this decomposition act on $\CH^\ast(M)$, one finds that the identity factors over an $r$-dimensional $\Q$-vector space, and so (iv) holds.
 \end{proof}
 
 \begin{defn}[Voisin {\cite[Section 3.1]{V0}}] A smooth projective variety $M$ is said to have {\em trivial Chow groups\/} if $M$ verifies any of the equivalent conditions of Lemma \ref{triv}.
 \end{defn}

\begin{prop} Let $X$ be a smooth projective variety with trivial Chow groups. Then $X$ has an MCK decomposition, with the property that
  \[ \CH^\ast(X)= \CH^\ast_{(0)}(X)\ .\]
\end{prop}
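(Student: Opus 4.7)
The plan is to use the characterization of varieties with trivial Chow groups as precisely those whose Chow motive is a sum of Tate twists, and then to observe that all the MCK data can be read off from the cohomological realization since the cycle class map is injective on all Chow groups involved.

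First, by condition (i) of Lemma \ref{triv}, there is an isomorphism $h(X)\cong \bigoplus_i \one(-a_i)$ in $\sM_{\rm rat}$ with $a_i\ge 0$. Grouping summands by the value of $a_i$, this rewrites as $h(X)=\bigoplus_{k\ge 0}h^{2k}(X)$ where $h^{2k}(X)$ is a sum of copies of $\one(-k)$. The projectors $\pi^{2k}_X\in\CH^{\dim X}(X\times X)$ of this decomposition (together with $\pi^{2k+1}_X:=0$, since there is no odd cohomology) give a Chow--K\"unneth decomposition of $X$: they are mutually orthogonal idempotents, and $(\pi^{2k}_X)_\ast$ acts as the identity on $H^{2k}(X,\Q)$ and as zero on $H^j(X,\Q)$ for $j\ne 2k$.

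Next I would verify multiplicativity. The key input is that the product $X^3$ again has a Tate-type motive (since $h(X^n)=\bigoplus \one\bigl(-\sum_k a_{i_k}\bigr)$), so by Lemma \ref{triv} the cycle class map
\[
\CH^{2n}(X\times X\times X)\ \hookrightarrow\ H^{4n}(X\times X\times X,\Q)
\]
is injective. By Remark \ref{rem:mck}, the vanishing
\[
\pi^k_X\circ \Delta_X^{sm}\circ (\pi^i_X\times \pi^j_X)=0\qquad (i+j\ne k)
\]
holds modulo homological equivalence, because the cup product respects the cohomological grading and the $\pi^k_X$ act as the K\"unneth projectors in cohomology. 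By injectivity of the cycle class map, this vanishing then lifts to $\CH^{2n}(X^3)$, so the decomposition is multiplicative.

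Finally, for the statement $\CH^\ast(X)=\CH^\ast_{(0)}(X)$, recall that $\CH^i_{(0)}(X):=(\pi^{2i}_X)_\ast\CH^i(X)$. Since $\CH^i(X)\hookrightarrow H^{2i}(X,\Q)$ by Lemma \ref{triv}, the action of $\pi^{2k}_X$ on $\CH^i(X)$ is determined by its cohomological action on $H^{2i}(X,\Q)$; this is zero for $k\ne i$ and the identity for $k=i$. Combining this with $\sum_k \pi^{2k}_X=\Delta_X$, one obtains $(\pi^{2i}_X)_\ast\CH^i(X)=\CH^i(X)$, which is the desired equality. There is no real obstacle here: once Lemma \ref{triv} is in place, the argument is forced, and the only delicate point is simply noting that $X^3$ also has trivial Chow groups so that the cohomological MCK identity upgrades to a Chow-theoretic one.
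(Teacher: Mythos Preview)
Your proof is correct and follows essentially the same approach as the paper: the key step in both is that the MCK vanishing \eqref{vani} holds in cohomology (Remark \ref{rem:mck}) and then lifts to Chow because $X^3$ again has trivial Chow groups, and the second statement follows since $(\pi^j_X)_\ast\CH^i(X)\subset\CH^i_{hom}(X)=0$ for $j\ne 2i$. The only difference is that you spell out the construction of the CK decomposition from the Tate-type motive, whereas the paper leaves this implicit.
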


\begin{proof} As remarked above, the required vanishing \eqref{vani} is always true in cohomology. Since $\CH^\ast_{hom}(X^3)=0$, the required vanishing \eqref{vani} also holds in $\CH^{2n}(X^3)$.
As for the second statement, we observe that
 \[  (\pi^j_X)_\ast \CH^i(X)\ \subset\ \CH^i_{hom}(X)=0\ \ \forall\ j\not=2i\ .\]
 \end{proof}

\subsection{Cayley trick}

An important tool for our proofs will be the well-known "Cayley trick". For the sake of self-containedness, we prefer to recall it here. For a projective variety $X$ with a vector bundle $E$, we will denote by $Z(X,E)$ the zero locus of a general section of $E$.

\begin{lm}\label{cayley}
Let $M_2$ be a smooth projective variety. Let $L$ and $F$ be a line bundle resp. a vector bundle on $M_2$, such that $\sO_{\P^1}(1)\boxtimes L \oplus \sO_{\P^1}\boxtimes F$ is globally generated on $\P^1\times M_2$, and consider 
  \[ \wt{X}=Z(\P^1\times M_2, \sO_{\P^1}(1)\boxtimes L \oplus \sO_{\P^1}\boxtimes F)\ .\] 
Then $\wt{X}$ is isomorphic to the blow-up $Bl_{Y}X$, where $X=Z(M_2,F)$ and $Y= Z(X,L^{\oplus 2})$.
\end{lm}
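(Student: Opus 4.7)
The plan is to identify $\wt{X}$ explicitly as a subvariety of $\P^1 \times X$, and then recognise its defining equation as the classical local equation for the blow-up of a codimension-$2$ complete intersection.

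First, I would decompose the section of $\sO_{\P^1}(1)\boxtimes L \oplus \sO_{\P^1}\boxtimes F$ into its two components. The second component lies in $H^0(\P^1\times M_2,\sO_{\P^1}\boxtimes F) = H^0(M_2,F)$, i.e.\ it is pulled back from $M_2$; therefore its zero locus in $\P^1\times M_2$ is exactly $\P^1\times X$. Consequently
\[
\wt{X} \ =\ \bigl\{(t,x)\in\P^1\times X\ :\ t_0 s_0(x)+t_1 s_1(x)=0\text{ in }L_x\bigr\},
\]
where $(s_0,s_1)\in H^0(M_2,L^{\oplus 2})$ is the first component of the section and $(t_0:t_1)$ are homogeneous coordinates on $\P^1$. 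The subvariety of $X$ where both $s_0$ and $s_1$ vanish is, by definition, exactly $Y=Z(X,L^{\oplus 2})$.

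Next I would run a local computation. Pick $x_0\in X$, trivialise $L$ in an affine neighbourhood $U$ of $x_0$ by a generator $e$, and write $s_i=f_i e$ with $f_i\in\sO_X(U)$. Then $\wt X\cap (\P^1\times U)$ is cut out by the single equation $t_0 f_0+t_1 f_1=0$ in $\P^1\times U$. This is precisely, up to the linear coordinate change $(t_0,t_1)=(u_1,-u_0)$, the classical defining equation $u_0 f_1-u_1 f_0=0$ of the blow-up of the ideal $(f_0,f_1)$ in $U$. Since $Y\cap U=\{f_0=f_1=0\}$, this identifies $\wt X\cap(\P^1\times U)$ with $Bl_{Y\cap U}(U)$. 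Gluing over an open cover of $X$ (the transition functions of $L$ only rescale the pair $(f_0,f_1)$ by the same unit, which is absorbed in the $(t_0,t_1)$-direction) yields a global isomorphism $\wt{X}\ \iso\ Bl_Y X$ over $X$.

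The step that requires the most care is checking smoothness/reducedness of $\wt X$ near $\P^1\times Y$, since this is where the defining section of $\sO_{\P^1}(1)\boxtimes L$ drops rank. The assumption of global generation (or rather, genericity of the section that it provides) ensures that $(f_0,f_1)$ may be taken to be part of a regular system of parameters along $Y$ in $X$; then, in either standard chart $t_i\neq 0$ of $\P^1$, the single equation $f_0+(t_1/t_0)f_1=0$ (respectively $(t_0/t_1)f_0+f_1=0$) cuts out a smooth divisor, so $\wt X$ is smooth of the expected dimension $\dim X$, and one recovers $\P^1\times Y$ as the exceptional divisor, consistent with $N_{Y/X}=L|_Y\oplus L|_Y$ having trivial projectivisation.
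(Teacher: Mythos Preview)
Your argument is correct and follows exactly the approach of the paper: first use the $\sO_{\P^1}\boxtimes F$-component to reduce to $\P^1\times X$, then recognise the remaining section of $\sO_{\P^1}(1)\boxtimes L$ as cutting out the blow-up of the locus $Y$ where its two coefficients vanish. The only difference is presentational: the paper defers this last step to \cite[Lemma 3.1]{B+} (the Cayley trick in the rank-two case), whereas you spell out the local blow-up equation $t_0f_0+t_1f_1=0$ explicitly.
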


\begin{proof} This is (a special case of) \cite[Lemma 3.1]{B+}. Roughly speaking: if $(s,t)$ is a section of $\sO_{\P^1}(1)\boxtimes L \oplus \sO_{\P^1}\boxtimes F$ on the $\P^1\times M_2$, one applies \cite[Lemma 3.1]{B+} to the zero
locus of $s$ restricted to $\P^1 \times X(t)$, where $X(t)$ is the zero locus of the section $t$ on $M_2$.
\end{proof}

\section{Criteria}

The goal of this section is to establish some criteria, as broad as possible, to show that certain families of Fano varieties admit indeed a MCK decomposition. Certainly, one starting point is given by the following proposition, due to Shen and Vial \cite[Proposition 2.4]{SV}.

\smallskip

Let $X$ be a smooth projective variety and  $i: Y \hookrightarrow X$ a smooth subvariety of codimension $r+1$. We will denote by $\tilde{X}$ the blow-up of $X$ along $Y$. Before starting to prove our results we need to recall the following definition.

\begin{defn}
Let $X$ and $Y$ be smooth projective varieties. A correspondence $L\in \CH^p(X \times Y)$ is said to be of pure grade $j$ if $L\in \CH^p_{(j)}(X \times Y)$. In particular, a morphism $g : X \to Y$ is of pure grade 0 if its graph is in $\CH^d_{(0)} (X \times Y )$, where
$d = dim (Y)$.
\end{defn}

\begin{prop}\label{bucrit}
Assume that both $X$ and $Y$ admit multiplicative Chow-K\"unneth decompositions $\{\pi^i_X\}$ and $\{\pi^i_Y\}$, respectively, such that

\begin{enumerate}[label=(\roman*)]
\item the Chern classes of the normal bundle $N_{Y/X}$ sit in $\mathrm{CH}^*_{(0)}(Y)$;
\item the morphism $i:Y\to X$ is of pure grade 0.
\end{enumerate}

Then also $\tilde{X}$ admits a MCK decomposition.
\end{prop}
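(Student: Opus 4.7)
The plan is to follow the strategy of Shen--Vial \cite[Proposition 2.4]{SV}. Denote by $\sigma\colon \tilde X\to X$ the blow-up, by $j\colon E\hookrightarrow \tilde X$ the exceptional divisor, and by $p\colon E\to Y$ the projective bundle structure (so $E=\P(N_{Y/X})$). The standard blow-up formula yields a canonical isomorphism of Chow motives
\[
  h(\tilde X)\ \cong\ h(X)\ \oplus\ \bigoplus_{k=1}^{r} h(Y)(-k)\ \ \ \hbox{in}\ \sM_{\rm rat},
\]
where the correspondences realizing the summands are built from $\sigma$, $j$, $p$, and powers of $c_1(\sO_E(1))$. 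Via the Grothendieck relation on $E$, the class $c_1(\sO_E(1))$ is expressible in terms of the Chern classes of $N_{Y/X}$ pulled back to $E$. Combining this decomposition with the given projectors $\pi^i_X$ and $\pi^i_Y$, one obtains a natural candidate CK decomposition $\{\pi^i_{\tilde X}\}$ for $\tilde X$; checking that it is a CK decomposition (conditions (1) and (2) of the definition) is purely cohomological and routine.

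The heart of the proof is verifying multiplicativity, i.e.\ the vanishing \eqref{vani} for $\{\pi^i_{\tilde X}\}$. The key input is an explicit formula for the small diagonal $\Delta_{\tilde X}^{sm}\in \CH^{2\dim X}(\tilde X^3)$ that expresses it as a sum of terms of three kinds: (a) the pullback of $\Delta_X^{sm}$ along $\sigma\times\sigma\times\sigma$, (b) pushforwards along $j\times j\times j$ of products built from $\Delta_E^{sm}$, powers of $c_1(\sO_E(1))$, and $p$-pullbacks of $\Delta_Y^{sm}$, and (c) ``mixed'' correction terms coming from the excess intersection along $E$. This formula is essentially the one worked out in \cite[Proposition 2.4]{SV}; once it is in hand, the verification of \eqref{vani} is bookkeeping.

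The bookkeeping step rests on three facts. First, the multiplicativity of the CK decompositions of $X$ and of $Y$ guarantees that terms of type (a) and the $\Delta_Y^{sm}$ factors in terms of type (b) preserve the grading. Second, hypothesis (ii) ensures that the correspondence of $i\colon Y\hookrightarrow X$ (and hence its various twists that enter the small-diagonal formula) is of pure grade $0$, so inserting $i$ does not destroy the grading. Third, hypothesis (i) says that the Chern classes of $N_{Y/X}$ act in grade $0$ on $\CH^\ast(Y)$, which is exactly what is needed for the excess-intersection contributions in (c), as well as the $c_1(\sO_E(1))$-factors in (b), to lie in the correct graded piece.

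The main obstacle is the derivation and careful tracking of the small-diagonal formula in item (b)--(c) above: one has to keep track of all excess-intersection contributions along $E$, and verify that every single resulting correspondence carries the correct bigrading. This is precisely where hypotheses (i) and (ii) are needed, and is the content of \cite[Proposition 2.4]{SV}; the routine but lengthy verification will be omitted and the reader will be referred there.
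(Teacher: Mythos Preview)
Your approach is essentially the same as the paper's: both defer to the Shen--Vial blow-up result, and your sketch is a reasonable expansion of what that reference contains. Two small points. First, the precise reference is \cite[Proposition 2.4]{SV2} (the Hilbert cube paper), not \cite{SV}. Second, and more substantively, you omit an observation that the paper's proof makes explicit: in loc.\ cit.\ the MCK decompositions of $X$ and $Y$ are assumed to be \emph{self-dual}, a hypothesis not present in the statement here. The paper closes this gap by noting that any MCK decomposition is automatically self-dual, citing \cite[Footnote 24]{FV}. Without this remark your appeal to \cite{SV2} is not quite complete.
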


\begin{proof} This is \cite[Proposition 2.4]{SV2}. Note that in loc. cit., it is required that the MCK decompositions of $X$ and $Y$ are {\em self-dual\/}; however, this condition is actually always fulfilled \cite[Footnote 24]{FV}.

\end{proof}

For later use, we state a lemma:

\begin{lm}\label{injection2} Let $X,Y$ and $\tilde{X}$ be as in Proposition \ref{bucrit}. Assume $X$ and $Y$ verify the assumptions of Lemma \ref{injection}. Then
  \[ \CH^i_{(0)}(\tilde{X})\ \to\ H^{2i}(\tilde{X},\Q) \]
  is injective for $i\ge n-1$.
\end{lm}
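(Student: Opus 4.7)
The strategy is to decompose $\CH^i_{(0)}(\tilde{X})$ via Manin's motivic blow-up formula and to reduce the claim to the corresponding injections for $X$ and $Y$ supplied by Lemma \ref{injection}.

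First I would invoke Proposition \ref{bucrit} to equip $\tilde{X}$ with an MCK decomposition. A key feature of the Shen--Vial construction (see \cite[Proposition 2.4]{SV2}) is that the projectors $\pi^k_{\tilde{X}}$ are built from the MCK projectors of $X$ and $Y$ by means of pullback along the blow-up morphism $p\colon\tilde{X}\to X$ (which is of pure grade $0$ thanks to the hypotheses of Proposition \ref{bucrit}) and of correspondences attached to the exceptional divisor $E\to Y$. This forces the resulting MCK decomposition of $\tilde{X}$ to be compatible with Manin's motivic blow-up formula $h(\tilde{X})\cong h(X)\oplus\bigoplus_{j=1}^{r} h(Y)(-j)$, so that on the grade-$0$ part of the bigrading one obtains
\[ \CH^i_{(0)}(\tilde{X}) \;\cong\; \CH^i_{(0)}(X) \;\oplus\; \bigoplus_{j=1}^{r} \CH^{i-j}_{(0)}(Y), \]
together with an analogous direct-sum decomposition of $H^{2i}(\tilde{X},\Q)$, compatibly with the cycle class map.

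It then suffices to show that the cycle class map is injective on each summand when $i\ge n-1$. Injectivity on the $\CH^i_{(0)}(X)$-summand is immediate from Lemma \ref{injection} applied to $X$. For the contributions from $Y$, note that $\dim Y=n-r-1$, so $\CH^{i-j}(Y)=0$ unless $i-j\le n-r-1$; combined with $1\le j\le r$ and $i\ge n-1$, this forces $i=n-1$, $j=r$ and $i-j=\dim Y$. In this sole remaining case, Lemma \ref{injection} applied to $Y$ (for the index $\dim Y\ge \dim Y-1$) delivers the required injection $\CH^{\dim Y}_{(0)}(Y)\hookrightarrow H^{2\dim Y}(Y,\Q)$.

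The main subtlety, and the only step that requires real work, is the compatibility claim in the second paragraph: namely that the Shen--Vial MCK projectors on $\tilde{X}$ split the Chow groups along the summands of Manin's formula. I expect this to be extracted from a direct inspection of the explicit formulas defining the $\pi^k_{\tilde{X}}$ in \cite{SV2}, using crucially that both $p^\ast$ and the correspondence $E\to Y\hookrightarrow X$ act by pure grade-$0$ operators under the standing hypotheses. Once this compatibility is in hand, the numerical bookkeeping above is entirely formal.
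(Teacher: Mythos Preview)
Your proposal is correct and follows essentially the same route as the paper: the key input is precisely that the correspondences realizing Manin's blow-up isomorphism are of pure grade $0$ (this is stated in \cite[Proposition 2.4]{SV2}, so no further ``direct inspection'' is needed), whence the bigrading splits along the summands and Lemma \ref{injection} applies to each. Your dimension bookkeeping matches the paper's explicit identifications $\CH^n_{(0)}(\tilde{X})=\CH^n_{(0)}(X)$ and $\CH^{n-1}_{(0)}(\tilde{X})=\CH^{n-1}_{(0)}(X)\oplus\CH^{n-1-r}_{(0)}(Y)$.
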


\begin{proof} It is proven in \cite[Proposition 2.4]{SV2} that the natural isomorphisms
  \[  \CH^i(\tilde{X})\cong \CH^i(X)\oplus \bigoplus_{\ell=1}^r \CH^{i-\ell}(Y)\]
  are given by correspondences of pure grade 0 (and so they respect the second grading). In particular, this means that we have isomorphisms
    \[  \begin{split} &\CH^n_{(0)}(\tilde{X}) = \CH^n_{(0)}(X)\ ,\\
                             &\CH^{n-1}_{(0)}(\tilde{X}) = \CH^{n-1}_{(0)}(X)\oplus \CH^{n-1-r}_{(0)}(Y)\ .\\
                             \end{split}\]
  Since these isomorphisms are compatible with the cycle class map, and Lemma \ref{injection} applies to $X$ and to $Y$, this proves the claimed injectivity.                           
                             \end{proof}

We will now make use of Proposition \ref{bucrit} in order to craft a criterion more specific to the cases we are going to inspect.

\begin{prop}\label{busurface}
Let $\sX\to B$, $\sY\to B$, $\wt{\sX}\to B$ and
$X$, $Y$, $\wt{X}$ be as in Lemma \ref{bufrank}. % be a smooth projective fourfold, and $Y\subset X$ a smooth surface. 
Assume that both $X$ and $Y$ admit MCK decompositions $\pi^i_X$ resp. $\pi^i_Y$ that are generically defined (with respect to the base $B$), and that

\begin{enumerate}
\item either $\sX\to B$ has the Franchetta property and $Y$ has trivial Chow groups,
\item or $X$ has trivial Chow groups and $\sY\to B$ has the Franchetta property;
\end{enumerate}

Then the blown-up variety $\tilde{X}$ also admits an MCK decomposition (that is generically defined).
\end{prop}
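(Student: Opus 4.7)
The plan is to apply the blow-up criterion Proposition \ref{bucrit}, whose two hypotheses -- that $c_k(N_{Y/X})\in \CH^k_{(0)}(Y)$ for all $k$, and that the graph of $i\colon Y\hookrightarrow X$ is of pure grade zero -- will be obtained from the Franchetta property combined with the trivial Chow hypothesis. Since the MCK decompositions $\pi^\bullet_X,\pi^\bullet_Y$ are assumed generically defined, the products $\pi^j_Y\times\pi^k_X$ are generically defined correspondences on $Y\times X$ whose sum over $j+k=\ell$ acts cohomologically as the K\"unneth projector onto the degree-$\ell$ part; thus both hypotheses reduce to showing that certain generically defined, cohomologically zero cycles vanish in Chow.

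The central intermediate step is to establish the Franchetta property for the fiber-product family $\sY\times_B\sX\to B$. I treat case (1), case (2) being symmetric. Since $Y$ has trivial Chow groups, Lemma \ref{triv} gives $\CH^*(Y\times X)\cong \CH^*(Y)\otimes \CH^*(X)$, and the generically defined MCK decomposition of $Y$ lifts the Tate-motive decomposition $h(Y)\cong \bigoplus\one(\ast)$ to the family, yielding $\CH^*(Y)=\GDCH^*_B(Y)$. Choose a generically defined basis $\{y_i\}$ of $\CH^*(Y)$ with Poincar\'e dual $\{y_i^\vee\}$. By the projection formula, any generically defined $\gamma\in\CH^*(Y\times X)$ decomposes as $\gamma=\sum_i p_1^*y_i\cdot p_2^*x_i$, where $x_i:=p_{2*}(p_1^*y_i^\vee\cdot\gamma)\in\GDCH^*_B(X)$. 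If $\gamma$ is cohomologically trivial, K\"unneth forces each $x_i$ to be cohomologically trivial, and Franchetta for $\sX$ gives $x_i=0$; whence $\gamma=0$.

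With the Franchetta property for $\sY\times_B\sX$ in hand, hypothesis (ii) follows directly: the graph $\Gamma_i$ is generically defined via $\sY\hookrightarrow\sX$, its cohomology class lies in $H^{2n}(Y\times X,\Q)$ and is therefore concentrated in total K\"unneth degree $2n$, hence $\Gamma_i-\sum_{j+k=2n}(\pi^j_Y\times\pi^k_X)_*\Gamma_i$ is generically defined and cohomologically zero, so vanishes in Chow. Hypothesis (i) is tautological in case (1), since $\CH^*(Y)=\CH^*_{(0)}(Y)$; in case (2) it follows by exactly the same pattern on $\sY$, using that $c_k(N_{Y/X})$ is generically defined and of pure Hodge type $(k,k)$, so that $c_k(N_{Y/X})-(\pi^{2k}_Y)_*c_k(N_{Y/X})$ is generically defined and cohomologically zero, hence zero by Franchetta for $\sY$. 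The MCK decomposition of $\tilde X$ output by Proposition \ref{bucrit} is built entirely from these generically defined ingredients, so it is itself generically defined.

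The main subtlety to grapple with is the identification $\CH^*(Y)=\GDCH^*_B(Y)$ in case (1), i.e.\ spreading out the Tate motive structure of $h(Y)$ over the family. In the applications of Table \ref{table:1} this will be immediate, because the relevant $Y$ are products of projective spaces and Grassmannians, whose Chow rings are generated by Chern classes of tautological bundles that extend to $\sY$. Beyond this point the argument is a clean combination of the K\"unneth decomposition, the projection formula, and the assumed Franchetta on $\sX$ (or $\sY$).
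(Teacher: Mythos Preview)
Your proof is correct and follows the same approach as the paper: both verify the hypotheses of Proposition~\ref{bucrit} by establishing the Franchetta property for the fiber product $\sX\times_B\sY\to B$, deduced from Franchetta on one factor together with trivial Chow groups on the other. You are more explicit than the paper about this deduction---the paper simply asserts that $\CH^*(X\times Y)\cong\bigoplus\CH^*(X)$ forces Franchetta for the product---and you correctly isolate the subtle point that one needs $\GDCH^*_B(Y)=\CH^*(Y)$ for the K\"unneth splitting to be compatible with restriction from the family, a point the paper glosses over but which, as you note, is harmless in all the applications of Table~\ref{table:1}.
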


\begin{proof}
As in the proof of Lemma \ref{bufrank}, the varieties $Y, X$ and $\tilde{X}$ are fibers of the families $\sY$, $\sX$, $\wt{\sX}$ over the common base $B$. Let
$n:=\dim X$.
 The proofs for $(1)$ and $(2)$ are similar, since in both cases we want to apply Proposition \ref{bucrit}. Let us start with $(1)$.

(1) The fact that $Y$ has trivial Chow groups implies that the Chern classes are injected into cohomology, and the condition $(i)$ of Proposition \ref{bucrit} holds true.

%The Chern classes of $Y$ inside $X$ are generically defined cycles. The Franchetta property for $Y$ is equivalent to the fact that generically defined cycles inject into cohomology. This in turn implies that Chern classes are contained in the zero graded part of $\CH^\ast (Y)$.

 In order to prove condition $(ii)$ of Proposition \ref{bucrit} we observe that, since $Y$ has trivial Chow groups, then the Chow ring of the product $X\times Y$ decomposes as a finite direct sum of copies of the Chow ring of 
 $X$, as follows: 
\begin{equation*}
 \mathrm{CH}^*(X\times Y) = \oplus_i \mathrm{CH}^*(X).
\end{equation*}  
The upshot is that the Franchetta property also holds for the product $X\times Y$. 

%Exactly as we did for property $(i)$, we observe now that the graph of $i: Y \hookrightarrow X$ is a generically defined cycle, and the same argument involving cohomology shows that $i$ is of pure grade 0. 

To prove that the graph of the inclusion $\iota\colon Y\to X$ is of pure grade 0, we need to prove the vanishing
  \[ \Lambda:= (\pi^i_X\times\pi^j_Y)_\ast (\Gamma_\iota)=0\ \ \hbox{in}\ \CH^n(X\times Y)\ \ \ \forall i+j\not=2n\ .\]
 To this end, we remark that the cycle $\Lambda$ is generically defined (with respect to $B$), and homologically trivial. The Franchetta property for 
$\sX\times_B \sY$ then implies that $\Lambda$ is rationally equivalent to 0.
This completes the proof of condition $(ii)$ of Proposition \ref{bucrit} in case $(1)$.

\smallskip

(2) 
%This case is easier. In fact, the fact that $Y$ has trivial Chow groups (that is $A^\ast_{hom}=0$) implies that the Chern classes are injected into cohomology, and the conditions of $(i)$ hold true. 
Let us first check condition $(i)$ of Proposition \ref{bucrit}.
We need to ascertain the vanishing
  \[  (\pi^i_Y)_\ast c_k(N_{Y/X})=0\ \ \hbox{in}\ \CH^k(Y)\ \ \forall i\not=2k\ .\]
 To this end, we remark that the cycle $ (\pi^i_Y)_\ast c_k(N_{Y/X})$ is generically defined (with respect to $B$) and homologically trivial. The Franchetta property for $\sY$ then gives the required vanishing.

The argument for checking condition $(ii)$ is exactly the same as in $(1)$.
\end{proof}

Now that we have proven Proposition \ref{busurface}, in the next section we will see that it can be applied to a lot of examples of Fano fourfolds that were introduced in \cite{B+}.

\section{Fanos}

In this section we will consider the examples from the Bernardara--Fatighenti--Mongardi--Tanturri list for which one can apply Proposition \ref{busurface}. We will be applying either part 1 or 2 of the proposition. The goal of course is to exhibit new classes of Fano varieties of K3 type, that have 
an MCK decomposition. The following is the main result of this paper:

\begin{thm}\label{main} 
Let $X$ be one of the Fano fourfolds in Table 1. Then $X$ has an MCK decomposition.
\end{thm}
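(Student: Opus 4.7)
The plan is to handle each row of Table 1 by a uniform template built on the Cayley trick (Lemma \ref{cayley}, together with the slightly more general form from \cite{B+}) and the blow-up criterion Proposition \ref{busurface}. For each entry, I would inspect the vector bundle $F$ on the ambient $M$ and look for a summand decomposition that, through the Cayley trick, realizes the Fano fourfold $\wt X$ as a blow-up
\[ \wt X = \mathrm{Bl}_Y X, \]
with $X$ the zero locus of a ``reduced'' bundle on a smaller ambient and $Y\subset X$ the zero locus of an auxiliary section. The few rows where no Cayley summand is available instead yield $\wt X$ of product form, and a direct K\"unneth argument for MCK decompositions takes care of those cases.

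After this reduction, I expect two templates to recur. In the rows labelled ``C--'', the variety $X$ is a smooth cubic fourfold (or an analogous Fano for which an MCK decomposition is already available, cf. Theorem \ref{mckcubic}) and $Y$ is a rational surface with trivial Chow groups. In the rows labelled ``K3--'', $X$ is a product or complete intersection of homogeneous varieties and therefore has trivial Chow groups by Lemma \ref{triv}, while $Y$ is a polarized K3 surface whose genus $g$ lies in the range $2\le g\le 10$ or $g\in\{12,13,14,16,18,20\}$ covered by Pavic--Shen--Yin and Fu--Laterveer. In either setting, with $B$ the parameter space of sections of $F$ on $M$, I would apply Proposition \ref{busurface}: part (1) in the C-- rows (trivial Chow groups for $Y$, Franchetta for $\sX\to B$) and part (2) in the K3-- rows (trivial Chow groups for $X$, Franchetta for $\sY\to B$).

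The main obstacle is verifying the Franchetta property over the base $B$. The delicate point is that $B$ parametrizes sections of $F$ on the ambient $M$, not the usual moduli space of cubic fourfolds nor the Mukai family of K3 surfaces, so the known Franchetta results do not transfer automatically. This is precisely the motivation for the two technical ingredients announced in the introduction: Lemma \ref{bufrank1} and Proposition \ref{frk3} are designed to bridge this gap, allowing one to propagate the Franchetta property through blow-up constructions and to adapt the projective-bundle argument of \cite{PSY} to the Cayley setting (so as to bound $\GDCH^\ast_B(Y)$ by the image of the Chow ring of a well-understood ambient). Combined with the Franchetta results of \cite{PSY,FL} for Mukai models of K3 surfaces, and with the universal description of the MCK projectors for cubic fourfolds and K3 surfaces (which ensures the generic definedness hypothesis of Proposition \ref{busurface}), these tools should carry out the Franchetta verification case by case. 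Once in place for each row, Proposition \ref{busurface} then delivers the desired MCK decomposition on $\wt X$, establishing Theorem \ref{main}.
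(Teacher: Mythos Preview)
Your overall architecture matches the paper's: realize each $\wt X$ via the Cayley trick as $\mathrm{Bl}_Y X$ over a common base $B$, then feed this into Proposition \ref{busurface}, with Lemma \ref{bufrank1} supplying Franchetta for $\sX\to B$ in the C-rows and Proposition \ref{frk3} supplying Franchetta for $\sY\to B$ in the K3-rows. Two concrete places, however, fall outside your template.

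\textbf{C-6.} The Cayley description here gives $\wt X$ as the blow-up of a \emph{one-nodal} cubic fourfold in its node. The base $X$ is singular, so Proposition \ref{bucrit} (hence Proposition \ref{busurface}) is unavailable, and Theorem \ref{mckcubic} does not cover it. The paper treats this row by a direct appeal to \cite[Corollary 5.6]{55} on blow-ups of nodal cubics; your sketch has no substitute for this.

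\textbf{K3-40 through K3-60.} Your assumption that in every K3-row the surface $Y$ is a polarized K3 in the PSY/FL genus range, presented in its Mukai model, is false. For K3-24, 25, 26, 28, 30 the factor $M_2$ is indeed (closely related to) the Mukai ambient $W_g$, and one plugs the injection \eqref{injpsy} of \cite{PSY} into Proposition \ref{frk3}. But for K3-40 through K3-60, $M_2$ is a product of projective spaces; the surfaces that occur have degrees $20,24,26,28,30,\ldots$, are sometimes blown-up K3's, and are not in their Mukai presentation. The input to Proposition \ref{frk3} here is instead that $\CH^\ast(M_2)$ is generated by divisors, so $\Im(\CH^2(M_2)\to\CH^2(Y))$ consists of intersections of divisor classes on $Y$ and hence (Beauville--Voisin) injects into $H^4(Y,\Q)$. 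Relying on \cite{PSY,FL} for these rows leaves a genuine gap.

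Two smaller points. In the C-rows, Lemma \ref{bufrank1} carries a hypothesis you did not flag: one must check that $\iota_\ast c_1(N_{Y/X})\in\CH^1(X)\cdot\CH^2(X)$. The paper verifies this case by case, either by identifying $N_{Y/X}$ as a restriction from the ambient (C-1, C-10, C-12) or by using $\CH^1(Y)=\Q$ (C-3, C-15). Finally, your aside about ``rows of product form with no Cayley summand'' does not correspond to any entry of Table 1; every row is handled through a blow-up description.
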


To prove Theorem \ref{main}, we separately treat each family of Table 1.

\subsection{MCK decomposition for six families of Fano fourfolds coming from cubic fourfolds}

In this subsection we introduce six families of smooth Fano fourfolds from \cite{B+}, that are obtained as blow-ups of cubic fourfolds, or of fourfolds strictly related to cubic fourfolds. For more information on the motive of a cubic fourfold and its relation with K3 surfaces see \cite{BP} \cite{Bu}.

\smallskip

To treat these families, we are going to use the following result:

\begin{lm}\label{bufrank1}

Let $M=M_1\times M_2$, and $F\to M$ a globally generated vector bundle.

Let 
  \[ \wt{\sX}\ \to\  B\subset \bar{B}:= \P H^0(M, F)\]
  be the family obtained as smooth dimensionally transverse sections of $F$. Assume that the projection $M\to M_1$ induces a morphism
  \[  p\colon\ \ \wt{\sX}\ \to\ \sX\ ,\]
  where $\sX$ is a family of smooth Fano fourfolds in $M_1$, and $p$ is the blow-up of some subfamily $\sY\subset\sX$ of smooth surfaces, whose fibers have trivial Chow groups. Assume moreover the following:

\begin{enumerate}

\item the $M_i$ have trivial Chow groups, and $\CH^\ast(M_i)$ is generated by intersections of divisors, for $i=1,2$;

\item the family $\sX\to B$ has a generically defined MCK decomposition;

\item  
   \[  \iota_\ast c_1(N_{Y/X}) \ \ \in\ \CH^1(X)\cdot \CH^2(X)\ \ \subset\ \CH^3(X)\  ,\]
    for every fiber $X$ with subvariety $Y\subset X$, where $\iota\colon Y\to X$ denotes the inclusion.
    \end{enumerate}
  
  Then $\wt{\sX}\to B$ and $\sX\to B$ have the Franchetta property.
\end{lm}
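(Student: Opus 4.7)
The plan is to apply Lemma \ref{bufrank} to reduce Franchetta for $\wt{\sX} \to B$ to Franchetta for both $\sX \to B$ and $\sY \to B$ over the common base $B$. Since every fiber $Y$ has trivial Chow groups, the cycle class map $\CH^*(Y) \to H^*(Y,\Q)$ is an isomorphism, and Franchetta for $\sY \to B$ is immediate. It therefore suffices to establish Franchetta for $\sX \to B$.

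For this, I would first reduce to a smaller base $B_X$ parametrizing only the $X$-defining component of the data of $F$, using \cite[Remark 2.6]{FLV} and the fact that the natural forgetful map $B \to B_X$ is dominant. Over $B_X$, the family $\sX$ is realized as an open subset of a projective bundle $\P(K_X) \to M_1$, where $K_X = \ker(H^0(M_1, F_X) \otimes \sO_{M_1} \twoheadrightarrow F_X)$ for the appropriate subbundle $F_X$ of $F$ (which inherits global generation). The projective bundle formula, combined with the fact that the relative hyperplane class is pulled back from $\bar B_X$ and hence restricts trivially on every fiber, identifies
\[
\GDCH^*_{B_X}(X) \;=\; j_X^*\,\CH^*(M_1) \;\subset\; \CH^*(X),
\]
where $j_X : X \hookrightarrow M_1$ is the inclusion. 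By assumption (1), this subring is generated by restrictions of divisors from $M_1$.

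I would finish by showing that this subring injects into $H^*(X,\Q)$, treated codimension by codimension on the fourfold $X$. For codimensions $i \le 2$, the Lefschetz hyperplane theorem (applicable to $X$ as a dimensionally transverse zero locus of a suitably positive globally generated bundle on $M_1$) yields injectivity $H^{2i}(M_1,\Q) \hookrightarrow H^{2i}(X,\Q)$; combined with $\CH^*(M_1) \cong H^*(M_1,\Q)$ from trivial Chow, the Chow restriction $\CH^i(M_1) \to \CH^i(X)$ is injective in these degrees and injects into cohomology. For codimensions $i \in \{3,4\}$, assumption (2) furnishes a generically defined MCK on $\sX$, so divisors lie in $\CH^1_{(0)}(X)$ and, by multiplicativity of the Chow--K\"unneth decomposition, $\GDCH^*_{B_X}(X) \subset \CH^*_{(0)}(X)$; Lemma \ref{injection}, applicable since $X$ is of K3 type with $H^3(X,\Q) = 0$ and the support condition on $\pi^0_X, \pi^2_X$ holds, then yields $\CH^i_{(0)}(X) \hookrightarrow H^{2i}(X,\Q)$ for $i \ge n-1 = 3$. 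Combining these two regimes gives Franchetta for $\sX \to B_X$, hence for $\sX \to B$; Lemma \ref{bufrank} then delivers Franchetta for $\wt{\sX} \to B$.

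The main obstacle is bridging the Lefschetz regime in low codimension and the Beauville--Voisin regime in high codimension. Assumption (3) enters as a compatibility check: when one confronts the projective-bundle description of $\GDCH^*(\wt X)$ through the incidence variety in $M \times \bar B$ with the blow-up decomposition $\CH^*(\wt X) = p^*\CH^*(X) \oplus (\text{exceptional contribution from } Y)$, the condition $\iota_* c_1(N_{Y/X}) \in \CH^1(X) \cdot \CH^2(X)$ guarantees that the exceptional-divisor contributions in codimension $3$ stay inside $\CH^*_{(0)}(X)$, so the two descriptions agree and the injectivity established for $\sX$ propagates correctly to $\wt{\sX}$.
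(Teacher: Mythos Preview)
Your reduction to a smaller base $B_X$ is the problem. The remark you cite from \cite{FLV} gives the implication in the \emph{opposite} direction: if $B \to B_X$ is dominant, then Franchetta for the base-changed family over $B$ implies Franchetta over $B_X$, not conversely. Concretely, pullback gives $\GDCH^*_{B_X}(X) \subseteq \GDCH^*_B(X)$, and the inclusion may well be strict, so injectivity of the smaller subgroup into cohomology says nothing about the larger one. The paper explicitly flags this difficulty just before the lemma: ``the common base $B$ is given by sections of $F$ on $M$, and it is not a priori clear that the cubic fourfolds $X$ verify Franchetta with respect to this base $B$.'' In addition, the lemma does not assume that $X$ is itself the zero locus of a globally generated bundle on $M_1$, so your projective-bundle identification $\GDCH^*_{B_X}(X)=j_X^*\CH^*(M_1)$ is unjustified in this generality. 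A further symptom that something is off: your main line of argument never uses assumption~(3).

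The paper avoids any change of base by working on $\wt X$ throughout, where \cite[Proposition 2.6]{FLV3} legitimately gives $\GDCH^*_B(\wt X)=\Im\bigl(\CH^*(M)\to\CH^*(\wt X)\bigr)$. Since only codimension $3$ on $X$ needs checking (Lemma \ref{only}), one uses that $\CH^*(M)$ is divisor-generated (assumption (1)) to write $\GDCH^3_B(\wt X)$ as triple products of divisors on $\wt X$, then the blow-up formula $\CH^1(\wt X)=f^*\CH^1(X)\oplus\Q[E]$, and finally pushes forward via $f_*$ (noting $f_*f^*=\id$, so $\GDCH^3_B(X)=f_*\GDCH^3_B(\wt X)$). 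This yields
\[
\GDCH^3_B(X)\ \subset\ \CH^1(X)\cdot\CH^2(X)\ +\ \Q\cdot f_*(E^3)\ .
\]
The first summand lands in $\CH^3_{(0)}(X)$ by the MCK (assumption (2), via Sublemma \ref{21}); for the second, $f_*(E^3)=-\iota_* c_1(N_{Y/X})$ by \cite[Example 3.3.4]{F}, and assumption (3) is exactly what forces this into $\CH^1(X)\cdot\CH^2(X)\subset\CH^3_{(0)}(X)$. Lemma \ref{injection} then gives the injection into cohomology. Your final paragraph gestures at this mechanism, but it is the whole argument, not a compatibility check appended to a different one.
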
  
  
\begin{proof}
It will suffice to establish Franchetta for $\sX\to B$; Lemma \ref{bufrank} then implies Franchetta for $\wt{\sX}\to B$. Let $X$ be a fiber and $f\colon\wt{X}\to X$ the blow-up morphism. Because $X$ is a Fano fourfold with $H^3(X,\Q)=0$, we only need to check the Franchetta property in codimension 3 (cf. Lemma \ref{only} below).
We observe that $f_\ast f^\ast=\id$ and $p_\ast p^\ast=\id$ and so the generically defined cycles on $X$ come from generically defined cycles on 
$\wt{X}$, i.e.
  \[  \GDCH^3_B(X)=  f_\ast  \GDCH^3_B(\wt{X})\ .\]

  Since the family $\tilde{\sX}$ is obtained as smooth zero locus of sections of the globally generated vector bundle $F$ on $M=M_1\times M_2$, applying \cite[Proposition 2.6]{FLV3} (with $r=1$)
  %in the same spirit as in \cite{PSY}, 
  we obtain the equality

$$\GDCH^*_B(\wt{X})= \Im \Bigl(\CH^*(M) \to \CH^*(\wt{X})\Bigr)\ .$$
  
  Since  $\CH^*(M)$ is generated by divisors, it follows that
  \[ \GDCH^3_B(\wt{X})\ \subset\      \GDCH^1_B(\wt{X})\cdot  \GDCH^1_B(\wt{X})\cdot  \GDCH^1_B(\wt{X})\ .\]
   But the blow-up formula tells us that $\CH^1(\wt{X})= f^\ast \CH^1(X)\oplus\Q[E]$, where $E\subset\wt{X}$ is the exceptional divisor, and so
  \[   \GDCH^3_B(\wt{X})\ \subset\ \CH^2(\wt{X})\cdot f^\ast \CH^1({X}) + \Q [E^3]\ .\]  
  Pushing forward to $X$ and applying the projection formula, we thus find that
  \[    \GDCH^3_B(X)=  f_\ast  \GDCH^3_B(\wt{X})\ \subset\ \CH^2(X)\cdot \CH^1(X) + \Q  [ f_\ast (E^3)]\ .  \]
  Applying Sublemma \ref{21} below, the first summand is contained in $\CH^3_{(0)}(X)$. As for the second summand, we have equality
  \[   f_\ast (E^3) =  -\iota_\ast  c_1(N_{Y/X})  \ \ \hbox{in}\ \CH^3(X)\  \]
\cite[Example 3.3.4]{F}, and so the hypothesis plus Sublemma \ref{21} implies that the second summand is also contained in $\CH^3_{(0)}(X)$. 
It thus follows that
  \[  \GDCH^3_B(X)\ \ \subset\ \CH^3_{(0)}(X)\ .\]
  Since $\CH^3_{(0)}(X)$ is known to inject into cohomology (Lemma \ref{injection}), this shows the Franchetta property for  $\sX\to B$, and closes the proof.

\begin{sublm}\label{21} Let $X$ be a smooth Fano fourfold with an MCK decomposition, and $H^3(X,\Q)=0$. Then
  \[  \CH^2(X)\cdot \CH^1(X)\ \ \subset\ \CH^3_{(0)}(X)\ .\]
\end{sublm}

To prove the sublemma, 
%one can use the excess intersection formula: $\CH^1(X)$ is generated by the hyperplane class $h$, and
 % \[  \CH^2(X)\cdot h \ \subset\ \iota^\ast \iota_\ast \CH^2(X)\ \subset\ \iota^\ast \CH^3(\P^5)=\Q[h^3]\ .\]
it suffices to observe that $\CH^i(X)=\CH^i_{(0)}(X)$ for $i=1,2$ (cf. Lemma \ref{only} below).
% and $\CH^3_{(0)}(X)$ injects into cohomology by Lemma \ref{}.)

(NB: when $X$ is a cubic fourfold, the sublemma also follows more directly from the excess intersection formula: $\CH^1(X)$ is generated by the hyperplane class $h$, and
  \[  \CH^2(X)\cdot h \ \subset\ \iota^\ast \iota_\ast \CH^2(X)\ \subset\ \iota^\ast \CH^3(\P^5)=\Q[h^3]\ \ =\ \CH^3_{(0)}(X)\ ,\]
  where $\iota\colon X\to\P^5$ denotes the inclusion morphism.)
   \end{proof}

In the rest of this section, $B$ will denote the basis as defined in Lemma \ref{bufrank1}.

\subsection{C-1}

The first family, dubbed C-1 in \cite{B+}, is given by zero loci of a section of $\sO(0,3)\oplus \sO(1,1)$ on $\P^1\times \P^5$. These fourfolds are indeed described as the blow-up of a general cubic fourfold along a cubic surface.

\smallskip

In order to show the Franchetta property for the family C-1, it is enough to remark that each cubic surface $Y$ inside a cubic fourfold $X$ is a linear section given by a $\P^3\subset \P^5$. In fact the normal bundle of $Y$ is the restriction to $Y$ of the normal bundle of $\P^3$ in $\P^5$. This in turn implies that $c_1(N_{Y/X})=h_{|Y}$. It follows that $\iota_*c_1(N_{Y/X})= h^3 \in \CH^3(X)$, and the hypotheses of Lemma \ref{bufrank1} are verified (we recall that cubic fourfolds have an MCK decomposition by Theorem \ref{mckcubic}). Since the Franchetta property holds, Proposition \ref{busurface} gives us the claimed MCK decomposition for the family C-1.

\subsection{C-3}

The second family, dubbed C-3 in \cite{B+}, is given by zero loci of a section of $\sO(1,2)\oplus \sQ_{\P^2}(0,1)$ on $\P^2\times \P^5$. These fourfolds are also described as the blow-up of a cubic fourfold in $\sC_8$ along a plane.

\smallskip

Since Fano fourfolds from family C-3 are blow-ups of cubic fourfolds $X$ along a plane $Y$, and $\CH^1(Y)=\Q$, we clearly have that
  \[ \iota_\ast \CH^1(Y)= [Y]\cdot h\ \ \subset\ \CH^2(X)\cdot \CH^1(X)\ .\]
We can thus apply Lemma \ref{bufrank1} and obtain the Franchetta property for $\sX$. The rest of the proof is the same as for family C-1.

\subsection{C-10}

As explained in \cite{B+}, Fano fourfolds in the family C-10 can be described as blow-ups $\wt{X}$ of $X$ with center $Y$, where $X$ is a Fano variety in the family C-1 and $Y$ is the blow-up of a cubic surface in 3 points (obtained by intersecting the cubic surface in $\P^5$ with a general $\P^3$). 

\smallskip

It is readily seen that the normal bundle of $Y$ in $X$ is the restriction of the normal bundle of a blown-up $\P^3$ in a blown-up $\P^5$, and so 
  \[ c_1(N_{Y/X})\ \ \in\  \ima\bigl( \CH^1(X)\to \CH^1(Y)\bigr)\ .\]
  The hypotheses of Lemma \ref{bufrank1} are then satisfied, and so the family $\sX\to B$ has the Franchetta property. Proposition \ref{busurface} now gives us the claimed MCK decomposition for the family C-10.
  
\subsection{C-12}  
  
 The family C-12 is treated in a similar fashion: as shown in \cite{B+}, Fano fourfolds in the family C-12 can be obtained as blow-ups $\wt{X}$ of $X$ with center $Y$, where $X$ is a special Fano variety in the family C-1 and $Y$ is the strict transform of a plane. 

\smallskip 
 
 Since the normal bundle of $Y$ in $X$ can be described as a restriction, the same reasoning as above readily gives that
    \[ c_1(N_{Y/X})\ \ \in\  \ima\bigl( \CH^1(X)\to \CH^1(Y)\bigr)\ .\]   
 The hypotheses of Lemma \ref{bufrank1} are thus again satisfied, and so the family $\sX\to B$ has the Franchetta property. Proposition \ref{busurface} 
 then gives us an MCK decomposition for the family C-12.
 
\subsection{C-15} 
 
Fano fourfolds in the family C-15 can be obtained as blow-ups $\wt{X}$ of $X$ with center $Y$, where $X$ is a special Fano variety in the family C-3 and $Y$ is a plane.

\smallskip

 Since $\CH^1(Y)=\Q$ the conditions of Lemma \ref{bufrank1} are once more satisfied, and the family $\sX\to B$ has the Franchetta property. Proposition \ref{busurface} 
 then gives an MCK decomposition for the family C-15.

\subsection{C-6}

Fano fourfolds from family C-6 of \cite{B+} have a description as one-nodal cubic fourfolds, blown-up in the node. By \cite[Corollary 5.6]{55}, the blow-up of a one-nodal cubic hypersurface (in any even dimension) with center the node has an MCK decomposition.

\subsection{Families of type K3-n}

In the following subsections we will show the existence of MCK decompositions for certain families of Fano fourfolds of type K3-n. These Fano fourfolds are obtained as blow-ups of Fano varieties with trivial Chow groups, with centers birational to K3 surfaces. First of all, we need a technical result.

\begin{prop}\label{frk3}
Let $\sX,\sY$ and $\wt{\sX}$ be families (respectively of fiber dimension 4, 2 and 4) of projective varieties obtained by proceeding as in Lemma \ref{cayley} over the base

$$B\subset \overline{B}:=\P H^0(\P^1\times M_2, \sO_{\P^1}(1)\boxtimes L \oplus \sO_{\P^1}\boxtimes F).$$

Assume that we have an injection

$$\Im\bigl(\CH^2(M_2)\to\CH^2(Y_b)\bigr)\ \hookrightarrow\  H^4(Y_b,\Q)\ ,$$

for any fiber $Y_b$. Then $\sY\to B$ has the Franchetta property.

\end{prop}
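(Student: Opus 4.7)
My plan is to reinterpret the base $B$ as parameterising sections of a single vector bundle on $M_2$, then apply the projective-bundle trick of Fu--Laterveer--Vial to reduce the claim to the given hypothesis. The K\"unneth formula yields the canonical isomorphism
\[ H^0\bigl(\P^1\times M_2,\ \sO_{\P^1}(1)\boxtimes L\oplus \sO_{\P^1}\boxtimes F\bigr)\ \cong\ H^0(M_2,L)^{\oplus 2}\oplus H^0(M_2,F)\ =\ H^0(M_2,L^{\oplus 2}\oplus F). \]
Writing a section as $(s,t)$ with $s=s_0\xi_0+s_1\xi_1$, Lemma \ref{cayley} identifies the fibre $Y_b$ with $Z(s_0)\cap Z(s_1)\cap Z(t)\subset M_2$, that is, the zero locus of the associated section of $L^{\oplus 2}\oplus F$. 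So (up to a dominant change of parameter space) $\sY\to B$ is the family of smooth zero loci of sections of $L^{\oplus 2}\oplus F$ on $M_2$.

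The next step is the projective-bundle trick. Global generation of $\sO_{\P^1}(1)\boxtimes L\oplus \sO_{\P^1}\boxtimes F$ on $\P^1\times M_2$ implies global generation of each of $L$ and $F$, hence of $L^{\oplus 2}\oplus F$, on $M_2$. Applying \cite[Proposition 2.6]{FLV3} (exactly as done in the proof of Lemma \ref{bufrank1}) then yields the key equality
\[ \GDCH^*_B(Y_b)\ =\ \Im\bigl(\CH^*(M_2)\to\CH^*(Y_b)\bigr). \]

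To conclude: Franchetta in codimension $2$ follows at once from the hypothesis, since $\GDCH^2_B(Y_b)=\Im(\CH^2(M_2)\to\CH^2(Y_b))$ is by assumption injected into $H^4(Y_b,\Q)$. Franchetta in codimension $1$ is automatic once one knows $\Pic^0(Y_b)_\Q=0$: by weak Lefschetz applied to the zero locus of the globally generated bundle $L^{\oplus 2}\oplus F$, one has $H^1(Y_b,\Q)=H^1(M_2,\Q)$, and the vanishing $h^{0,1}(M_2)=0$ holds in all the concrete situations where this proposition is invoked (the $M_2$ appearing in Table \ref{table:1} are products of projective spaces and/or Grassmannians). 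Codimension $0$ is trivial. The main obstacle---the heart of the proof---is the bookkeeping required in the first two steps: one must verify that $B$ (sections on $\P^1\times M_2$) matches, up to a dominant morphism, the natural base for the reformulated family of complete intersections in $M_2$, so that the generically defined cycles coincide and \cite[Prop.~2.6]{FLV3} can be legitimately applied.
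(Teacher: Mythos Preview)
Your argument is correct and reaches the same key equality as the paper, namely
\[
\GDCH^2_B(Y)\ =\ \Im\bigl(\CH^2(M_2)\to\CH^2(Y)\bigr),
\]
but by a genuinely different and shorter route. You observe that the K\"unneth isomorphism identifies $\overline{B}$ with $\P H^0(M_2,L^{\oplus 2}\oplus F)$ exactly, so that $\sY\to B$ is (the restriction to an open subset of) the universal family of zero loci of the globally generated bundle $L^{\oplus 2}\oplus F$ on $M_2$; then \cite[Proposition~2.6]{FLV3} applies \emph{directly} to $\sY$ in $M_2$. The paper instead applies \cite[Proposition~2.6]{FLV3} only to $\wt{\sX}$ in $\P^1\times M_2$, then transfers information to $Y$ through the blow-up geometry: given $a\in\GDCH^2_B(Y)$, one pushes $g^\ast a$ from $E$ into $\wt X$ (so that it lands in $\GDCH^3_B(\wt X)=\Im(\CH^3(\P^1\times M_2)\to\CH^3(\wt X))$), restricts back to $E$ via the self-intersection formula to obtain $g^\ast a\cdot\xi$, and finally uses the projective-bundle decomposition $\CH^3(\P^1\times M_2)=\CH^3(M_2)\oplus h\cdot\CH^2(M_2)$ together with $\CH^3(Y)=0$ to conclude. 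Your approach bypasses this detour entirely; the paper's approach has the virtue of never leaving the ambient $\P^1\times M_2$ and making the role of the exceptional divisor explicit, which ties in more visibly with the rest of the argument (Lemma~\ref{bufrank}, Proposition~\ref{busurface}). Incidentally, the paper's proof treats only codimension~$2$, tacitly relying on the fact that in every application $Y$ is a K3-type surface with $\CH^1_{hom}(Y)=0$; your explicit discussion of codimension~$1$ is therefore a small bonus, though the weak-Lefschetz justification you give is not needed in practice.
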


\begin{proof}
We will denote as usual by $Y$ the surface contained in $X$ and by $\wt{X}\to X$ the blow-up with center $Y$, and exceptional divisor $E$. The natural map $E\to Y$ will be denoted by $g$, and $\xi$ will be the class of the tautological sheaf on $E$.

We need to show that

\begin{equation}\label{needto}
\GDCH^2_B(Y)=\Im\bigl(\CH^2(M_2) \to CH^2(Y)\bigr)\ .
\end{equation}

Since our vector bundle is globally generated, it follows from \cite[Proposition 2.6]{FLV3} that

$$\GDCH^*_B(\wt{X})= \Im\bigl(\CH^*(\P^1\times M_2) \to \CH^*(\wt{X})\bigr)\ ,$$

and so, in particular

$$\Im\bigl(\GDCH^*_B(\wt{X}) \to \GDCH^*_B(E)\bigr) = \Im\bigl(\CH^*(\P^1\times M_2) \to \CH^*(E)\bigr)\ .$$

Then one sees that

$$g^*\GDCH^2(Y)\cdot \xi \subset \Im\bigl(\CH^3(\P^1 \times M_2) \to \CH^3(E)\bigr)\ .$$

Moreover $\CH^3(\P^1\times M_2)= \CH^3(M_2)\oplus \CH^2(M_2)\cdot h$, where $h$ is the class of the tautological bundle of the projective bundle $\P^1\times M_2 \to M_2$. \footnote{Remark that the class $\xi$ is just the restriction of $h$ to $E$} On the other hand, we have $\CH^3(E)=\CH^2(Y)\cdot\xi$. We want to show that $g^*\GDCH^2(Y)\cdot \xi$ is indeed contained in $\Im(\CH^2(M_2)\cdot h\to \CH^3(E))$. It is straightforward to see that 

$$\Im\bigl(\CH^3(M_2)\to \CH^3(E)\bigr)=0\ ,$$
since $\CH^3(Y)=0$ and the map factors through $\CH^3(Y)$. Consider now an element $g^*(a)\cdot h\in \GDCH^2(Y)\cdot \xi$. Let $p_2: \P^1\times M_2 \to M_2$ be the natural projection. Then we have that 

$$g^*(a)\cdot h\ \in\ \Im\bigl(p_2^*\CH^2(M_2)\cdot h \to g^*\CH^2(Y)\cdot \xi\bigr)\ ,$$
and functoriality of pull-back gives the equality

$$\Im\bigl(p_2^*\CH^2(M_2)\cdot h \to g^*\CH^2(Y)\cdot \xi\bigr) = g^*\Im\bigl(\CH^2(M_2) \to \CH^2(Y)\bigr)\cdot h\ .$$
Combining the last two inclusions, we find that 
  \[  g^*(a)\cdot h\ \in\ g^*\Im\bigl(\CH^2(M_2) \to \CH^2(Y)\bigr)\cdot h\ .\] 
Finally, by applying the projection formula we find that $a$ lies in $\Im\bigl(\CH^2(M_2)\to\CH^2(Y)\bigr)$. This proves equality \eqref{needto}.
\end{proof}

\subsection{K3-24} A Fano fourfold $\wt{X}$ of type K3-24 is obtained as the blow-up of $X=X_{12}$ with center a genus 6 K3 surface $Y\subset\Gr(2,5)$.
Here $M_2=\Gr(2,5)$ and the injection \eqref{injpsy} holds for $g=6$ thanks to \cite{PSY}. Proposition \ref{frk3} then gives the Franchetta property for the family $\sY\to B$. The Fano fourfold $X=X_{12}$ has trivial Chow groups (this is proven directly in \cite{JLMS}; alternatively this follows from the existence of a full exceptional collection for the derived category of $X$ \cite{Kuz}). 

Applying Proposition \ref{busurface} we obtain an MCK decomposition (that is generically defined) for Fano fourfolds of type K3-24.

\subsection{K3-25}
A Fano fourfold $\wt{X}$ of type K3-25 is obtained as the blow up of $X=X_{14}$ along a genus 8 K3 surface $Y\subset \Gr(2,6)$. In this case $M_2=\Gr(2,6)$. The fourfold $X_{14}$ has trivial Chow groups \cite{JLMS}, and the family of degree 14, genus 8, K3 surfaces has the Franchetta property by Proposition \ref{frk3}, since injection \eqref{injpsy}  holds for $g=8$ by the work of \cite{PSY}. This implies as above the existence of a generically defined MCK decomposition.

\subsection{K3-26}
A Fano fourfold $\wt{X}$ of type K3-26 is obtained as the blow up of $X=X_{16}$ (a codimension two linear section of the Lagrangian Grassmannian $LG(3,6)$) along a genus 9 K3 surface $Y\subset LG(3,6)$. The fourfold $X_{16}$ has trivial Chow groups since its derived category has a full exceptional collection \cite[Section 6.3]{kuzhypsec}. The injection \eqref{injpsy}  for $g=9$ holds \cite{PSY} and so the Franchetta property holds true for the family of K3 surfaces. Thus we obtain a generically defined MCK decomposition.

\subsection{K3-28}
Fourfolds from this family are blow-ups of Fano fourfolds $X=X_{18}$ along a genus 10 K3 surface. Here $M_2=\Gr(2,7)$.
The argument goes exactly along the same lines, following the same references, this time for $g=10$, as in case K3-24
(for the existence of the full exceptional collection see \cite[Section 6.4]{kuzhypsec}).

\subsection{K3-30} A Fano fourfold $\wt{X}$ of type K3-30 is the blow-up of $X=\Gr(2,4)$ with center a genus 5 K3 surface $Y\subset X$. It is known that
  \[ \ima\bigl( \CH^2(\Gr(2,4)\to \CH^2(Y)\bigr)=\Q \]
  (this follows from \cite[First proof of Proposition 2.1]{PSY}), and so Proposition \ref{frk3} guarantees the Franchetta property for $\sY\to B$. As clearly $X$ has trivial Chow groups, this gives the MCK decomposition for $\wt{X}$.

%\subsection{K3-36}

%\subsection{K3-37}

\subsection{K3-40}
Here $M_2=\P^1\times \P^5$ and its Chow ring is generated by intersections of divisors. In view of Proposition \ref{frk3}, this directly implies the Franchetta property for the family $\sY\to B$, whose members here are K3 surfaces of degree 8, blown up in 8 points. The fourfold $X$ is the blow-up of a 4-dimensional quadric along a 2-dimensional quadric, hence clearly has trivial Chow groups. Hence, we obtain a generically defined MCK decomposition for $\wt{X}$.

\subsection{K3-41}
For this family $M_2=\P^1\times \P^3$, and $X$ coincides with $M_2$. The fourfold $X$ has trivial Chow groups and its Chow ring is generated by intersections of divisors. This implies the Franchetta property for the family $\sY\to B$ of K3 surfaces, that in this case are bielliptic. The argument is the same as the preceding one.

\subsection{K3-46}This case is similar to the two preceding ones. The K3 surfaces $\sY\to B$ are some special degree 26 surfaces (see \cite{B+} for more details). In this case we have $M_2=\P^3\times \P^5$, hence its Chow ring is generated by intersections of divisors and the Franchetta property follows for $\sY\to B$. On the other hand $X$ is the blow up of a smooth quadric along a line, hence has trivial Chow groups. One concludes as in K3-40 and K3-41.

\subsection{K3-47}
For Fano fourfolds of type K3-47, the family of surfaces is made up by determinantal quartic K3 surfaces. We have $M_2=\P^3\times \P^3$, and its Chow ring is thus generated by intersections of divisors. As before this implies the Franchetta property for the K3 family $\sY\to B$. In this case $X$ is a codimension 2 linear section of $\P^3\times \P^3$, which has trivial Chow groups since it has a full exceptional collection by an application of \cite{hpdsegre}. The upshot is that the fourfold $\wt{X}$ has a generically defined MCK decomposition.

\subsection{K3-49}
In this case the family $\sY\to B$ is made up of degree 20 K3 surfaces. Moreover $M_2=\P^2\times \P^4$, whose Chow ring is generated by intersections of divisors. This implies the Franchetta property for $\sY$. The fourfold $X$ is a $(1,1)$-divisor of $\P^2\times Q^3$ (where $Q^3$ is a smooth quadric threefold), that has a full exceptional collection by the description in \cite{B+}. Hence it also has trivial Chow groups. This implies the existence of the  MCK decomposition.

\subsection{K3-55} The family of surfaces $\sY\to B$ here is made up of degree 30 K3 surfaces.
We have $M_2=\P^4\times \P^5$, thus once again the intersections of divisors generate the Chow ring, and the Franchetta property holds for K3 surfaces. Furthermore, the variety $X$ can be seen as a $\P^1$-bundle on a smooth quadric threefold, hence has trivial Chow groups, and so the blow-up $\wt{X}$ has a generically defined MCK decomposition.

\subsection{K3-56} For this family, the K3 surfaces are certain degree 28 K3 surfaces, with Picard rank 3.
The Chow ring of $M_2=\P^1\times \P^2 \times \P^2$ is generated by intersections of divisors, which implies the Franchetta property for $\sY\to B$. The variety $X$ is isomorphic to the product of $\P^1$ and the blow-up of $\P^3$ in one point,  hence has trivial Chow groups. This implies that $\wt{X}$ has a generically defined MCK decomposition.

\subsection{K3-58} In this case, the family of surfaces $\sY\to B$ is made up of bielliptic K3 surfaces.
On the other hand, $M_2=\P^1\times \P^1\times \P^3$ and its Chow ring is generated by divisors. This implies the Franchetta property for $\sY\to B$. Now, $X$ is the product of $\P^1$ and the blow-up of $\P^3$ along a line, thus it has trivial Chow groups. This implies that $\wt{X}$ has an MCK decomposition.

\subsection{K3-59} The family of surfaces for K3-59 is given by K3 surfaces that are isomorphic to a degree 20 K3.
Here $M_2$ is $\P^1\times \P^2\times \P^2$, and its Chow ring is generated by intersections of divisors. This in turn implies the Franchetta property for $\sY\to B$. On the other hand, $X$ is the product of $\P^1$ and a flag manifold. The upshot is that $X$ has trivial Chow groups. The combination of these results implies that there is an MCK decomposition for $\wt{X}$.

\subsection{K3-60} The K3 surfaces for this family are degree 24 surfaces with Picard rank 4.
In this case $M_2$ coincides with $X=(\P^1)^4$, and it has both trivial Chow groups and Chow ring generated by intersections of divisors. The latter property implies the Franchetta property for $\sY\to B$, and, combining this with the former, one obtains that $\wt{X}$ has a generically defined MCK decomposition.

%The following table indicates which part of Proposition \ref{busurface} and Lemma \ref{bufrank1} we are applying. As for the notation, we follow that of \cite{B+}. Each time we will be applying Prop. \ref{busurface}, the base $B$ will be the locus of smooth sections of the vector bundle $F$.

%\newcommand{\sQ}{\mathcal{Q}}
%\newcommand{\sU}{\mathcal{U}}
%\newcommand{\sR}{\mathcal{R}}

%\Michele{does a codimension 2 linear section of $G(2,5)$ have trivial Chow groups? (Robert) Yes, certainly ! Proof 1: Kuznetsov shows using HPD that there is a full exceptional collection.
%Proof 2: in my Pfaff/Grass paper I give an easy geometric argument that applies to codim. 2 linear sections of $G(2,n)$.}

\begin{rk} The families K3-30 and K3-41 already appear in \cite{FM}, under the labels $B_2$ resp. $B_1$. An MCK decomposition for these families was constructed in \cite{40}, by arguments similar to those of the present paper.
\end{rk}

\section{A consequence for the intersection product}

Our Theorem \ref{main} has some interesting consequences on the behaviour of the intersection product with respect to the cycle class map. Not surprisingly, Fano fourfolds that appear in Theorem \ref{main} behave very much like K3 surfaces.

%\begin{lm}\label{injection}\Michele{reference needed (Robert) Done.}
%Let $X$ be a smooth projective variety of dimension $n$ with a MCK decomposition. Then $\CH^t_{(0)}(X)$ injects into cohomology via the cycle class map for $t\geq n-1$.
%\end{lm}
%
%\begin{proof} This is stated in \cite[Introduction]{V6}; a proof can be found in \cite[Lemma 2.20]{23.6}.
%\end{proof}

\begin{cor}\label{mainc}

 Let $X$ be one of the Fano fourfolds in Table 1. Then the image of the intersection product map
 
  \[ m: \CH^1(X)\otimes \CH^2(X)\ \to\  \CH^3(X) \] 
  injects into cohomology. In other words, there are $\rho:=\dim H^2(X,\Q)$ distinguished 1-cycles $\ell_1,\ldots,\ell_\rho$ such that
  
  \[  \Im\Bigl(  \CH^1(X)\otimes \CH^2(X)\ \to\  \CH^3(X)\Bigr) =\bigoplus_{i=1}^\rho \Q[\ell_i]\ .\]

\end{cor}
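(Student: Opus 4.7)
The plan is to combine Theorem~\ref{main} with the general bigrading results Sublemma~\ref{21} and Lemma~\ref{injection}, and then use hard Lefschetz to identify the image of $m$ in cohomology. By Theorem~\ref{main} each fourfold $X$ in Table~1 carries an MCK decomposition; moreover, since $X$ is a Fano variety of K3 type with $d=2$, its odd cohomology vanishes, in particular $H^3(X,\Q)=0$.

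First, I would show that $\Im(m)$ lands in $\CH^3_{(0)}(X)$. This is exactly the content of Sublemma~\ref{21}: $\CH^1(X)\cdot \CH^2(X)\subset \CH^3_{(0)}(X)$ for any smooth Fano fourfold with an MCK decomposition and with $H^3(X,\Q)=0$. Second, I would invoke Lemma~\ref{injection} with $n=4$ and $i=n-1=3$ to conclude that the cycle class map $\CH^3_{(0)}(X)\to H^6(X,\Q)$ is injective; the support condition \eqref{suppcond} is guaranteed by the construction of the MCK decompositions in Theorem~\ref{main}, since all the Fano fourfolds in Table~1 are built as blow-ups of Kimura finite-dimensional varieties along Kimura finite-dimensional centers, and hence are themselves finite-dimensional. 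Composing, we obtain an injection $\Im(m)\hookrightarrow H^6(X,\Q)$.

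Third, I would identify the image in cohomology. Since $X$ is of K3 type, $H^2(X,\Q)=H^{1,1}(X)$ is generated by algebraic classes, so $\CH^1(X)\twoheadrightarrow H^2(X,\Q)$ and $\rho=\dim H^2(X,\Q)$. Applying the hard Lefschetz theorem with respect to a polarization $h\in \CH^1(X)$, cup product with $h^2$ induces an isomorphism $H^2(X,\Q)\xrightarrow{\sim} H^6(X,\Q)$. Choosing lifts $\beta_1,\dots,\beta_\rho\in \CH^1(X)$ of a basis of $H^2(X,\Q)$, the cycles
\[
 \ell_i := \beta_i\cdot h^2\ \in\ \CH^1(X)\cdot \CH^2(X)\ =\ \Im(m)
\]
have cohomology classes forming a basis of $H^6(X,\Q)$. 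Combined with the injection of the previous step, this forces $\Im(m)=\bigoplus_{i=1}^\rho \Q[\ell_i]$, as claimed.

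The only step that requires a moment's attention is the applicability of Lemma~\ref{injection}, namely verifying the support condition \eqref{suppcond}; as noted in the remark following that lemma, this holds as soon as $X$ is Kimura finite-dimensional, which is preserved under the blow-up constructions producing all varieties of Table~1 from cubics and from varieties with trivial Chow groups. Beyond that point the argument is formal.
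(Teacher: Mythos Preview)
Your overall strategy matches the paper's: show $\Im(m)\subset\CH^3_{(0)}(X)$ via the bigrading (the paper uses Lemma~\ref{only}, which is what underlies your Sublemma~\ref{21}), then inject $\CH^3_{(0)}(X)$ into cohomology, then do a cohomological dimension count. Your hard Lefschetz argument for the last step is more explicit than the paper's ``straightforward cohomology computation''.

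There is, however, a genuine gap in your justification of the support condition \eqref{suppcond}. You assert that the Fano fourfolds in Table~1 are Kimura finite-dimensional because they are blow-ups of Kimura finite-dimensional varieties along Kimura finite-dimensional centers. But neither cubic fourfolds nor K3 surfaces are known to be Kimura finite-dimensional; this is a well-known open problem (equivalent, for K3 surfaces, to a case of Bloch's conjecture). So you cannot invoke the remark after Lemma~\ref{injection} in that form.

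The paper avoids this by appealing to Lemma~\ref{injection2} rather than Lemma~\ref{injection}. The point of Lemma~\ref{injection2} is that for a blow-up $\wt{X}$ produced by Proposition~\ref{bucrit}, the isomorphism $\CH^i(\wt{X})\cong \CH^i(X)\oplus\bigoplus_\ell \CH^{i-\ell}(Y)$ is given by correspondences of pure grade~$0$, so injectivity of $\CH^i_{(0)}(\wt{X})$ into cohomology reduces to the same statement for the base $X$ and the center $Y$ separately. For those, the support condition is immediate from the explicit shape of the projectors: when $X$ has trivial Chow groups or is a cubic hypersurface, $\pi^0_X$ and $\pi^2_X$ are visibly of the form (point)$\times X$ and (surface)$\times X$; for a K3 surface $Y$ one has $\pi^0_Y=o\times Y$ with $o$ the Beauville--Voisin point, and the condition on $\pi^2_Y$ is vacuous since $Y$ itself is a surface. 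No finite-dimensionality is needed. Replacing your appeal to Kimura finite-dimensionality by Lemma~\ref{injection2} (or by this direct check of \eqref{suppcond} on $X$ and $Y$) fixes the gap, and then your argument goes through.
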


\begin{proof}

Note that for any Fano fourfold $X$ in Table 1 we have $H^3(X,\Q)=0$ (indeed, these Fano fourfolds are obtained either as blow-up of a fourfold with trivial Chow groups with center a K3 surface, or as repeated blow-up of a cubic fourfold with center some surfaces with trivial Chow groups).

In view of Lemma \ref{only} below, we thus have
  \[ \CH^i(X)=\CH^i_{(0)}(X)\ \ \forall\ i\not=3\ .\]
  As $\CH^\ast_{(\ast)}(X)$ is a bigraded ring under the intersection product, the image of the map $m$ is thus contained in $\CH^3_{(0)}(X)$.
But $\CH^3_{(0)}(X)$ injects into cohomology by virtue of Lemma \ref{injection2}. 

The rest of the claim is a straightforward cohomology computation.

\begin{lm}\label{only} Let $X$ be a smooth Fano fourfold with $H^3(X,\Q)=0$. Then
  \[ \CH^i_{hom}(X)=0\ \ \forall\ i\not=3\ .\]
  In particular, if in addition $X$ has an MCK decomposition then
  \[ \CH^i(X)=\CH^i_{(0)}(X)\ \ \forall\ i\not=3\ .\]
\end{lm}

To prove the lemma, we recall that smooth Fano varieties $X$ (are rationally connected and so) have $\CH_0(X)=\Q$. The Bloch--Srinivas argument \cite{BS} then implies that the Abel--Jacobi map
  \[ \CH^2_{hom}(X)\ \to\ \JJ^3(X) \]
  is injective, where $\JJ^3(X)$ is the intermediate Jacobian. The assumption on $H^3(X)$ guarantees that $\JJ^3(X)=0$, and so $\CH^2_{hom}(X)=0$. This proves the first statement.
  
  For the second statement of the lemma, this follows from the first statement upon observing that for any variety $X$ with an MCK decomposition one has
  \[  \CH^i_{(j)}(X)\ \subset\ \CH^i_{hom}(X)\ \ \forall\ j\not=0\ .\]
  This closes the proof of the lemma and of the corollary.
\end{proof}

\vskip1.0cm

\begin{nonumberingt} M.B. wants to thank R.L. for an invitation in April 2022 to Strasbourg, where this paper was started. R.L. wants to thank M.B. back for an invitation in September 2022 to Montpellier, where this paper was completed. Both of us thank Marcello Bernardara, Enrico Fatighenti, Laurent Manivel and Fabio Tanturri for kindly answering our questions. Finally, thanks to the referee for insightful comments that significantly improved our paper.
\end{nonumberingt}

\vskip1.0cm

\end{document}

\begin{table}
\centering
\begin{tabular}{ ||c  c||} 
 \hline
 Fano fourfold & Hypotheses  \\ 
 [0.5ex]
 \hline\hline
 C-1 & 2  \\
  \hline
 C-3 & 2  \\ 
  \hline
  C-4 & 2 \\
  \hline
  C-5 & 2    \\
 \hline 
 C-6 & 1  \\
 \hline 
 C-7 & 2  \\
 \hline 
 C-8 & 2  \\
 \hline
 C-9 & 2  \\
 \hline
 C-10 & \\
 \hline 
 C-12 & Frank for C-3 (Lem. \ref{bufrank}) + 2  \\
 \hline
 C-13 & Frank for C-6 (Lem. \ref{bufrank}) + 2  \\ 
 \hline
 C-14 & Frank for C-4 (Lem. \ref{bufrank}) + 2 \\ 
 \hline
 C-15 & 1  \\
 \hline
 C-16 & 1  \\
 \hline
 C-17 & Frank for C-3 or C-4 (Lem. \ref{bufrank}) + 2  \\ 
 \hline
 K3-24 & 1  \\
 \hline
 K3-25 & 1 (thanks to \cite[Prop. 4.6]{JLMS})  \\
 \hline
 K3-26 & 1  \\
 \hline
  K3-27 & 1  \\
 \hline 
 K3-28 & 1 \\
 \hline 
 K3-30 & 1 \\
 \hline
 K3-31 & 1  \\
 \hline
 K3-32 & 1 \\
  \hline
 K3-36 & 1  \\ 
 \hline
 K3-37 & 1 \\
 \hline
 K3-38 & 1 \\
  \hline
 K3-39 & 1\\ 
  \hline
 K3-40 & 2 + Lem. \ref{bufrank} \\
 \hline
 K3-41 & 1 \\
 \hline
 K3-42 & 2 + Lem. \ref{bufrank}  \\
 \hline
 K3-46 & 2 + Lem. \ref{bufrank} \\
 \hline
 K3-47 & 1 ? \\
 \hline
 K3-48 & 1  \\
 \hline
 K3-49 & 1  \\
 \hline
 K3-50 & 1  \\
  \hline
 K3-51 & 1 \\ 
 \hline
 K3-53 & 1  \\
 \hline
 K3-55 & 1  \\
 \hline
 K3-56 & 1 ?  \\
 \hline
 K3-58 & 2 + Lem. \ref{bufrank} \\
 \hline
 K3-59 & 1?  \\
 \hline
 K3-60 & 1? \\
 [1ex]
 \hline

\end{tabular}
\caption{ bla}
\label{table:2}
%\end{center}
\end{table}

\subsection{C-1} As shown in \cite[Section 4]{B+}, a fourfold of type C-1 is the blow-up of a general cubic fourfold along a cubic surface. Cubic surfaces have trivial Chow groups. By Lemma \cite{bufrank1}(1) plus the injection

$$\Im(\CH^2(\P^5) \to \CH^2(X))\hookrightarrow H^\ast(X,\Q),$$

we have the Franchetta property for $\sX\to B$. Hence we apply Prop. \ref{busurface}(1) and conclude.

\subsection{C-3}  As shown in \cite[Section 4]{B+}, a fourfold of type C-3 is the blow-up of a cubic fourfold in $\sC_8$ along a plane. The argument goes exactly along the same lines as C-1.

\subsection{C-4} As shown in \cite[Section 4]{B+}, a fourfold of type C-4 is the blow-up of a general cubic fourfold along a line. The argument is the same as in C-1.

\subsection{C-5} As shown in \cite[Section 4]{B+}, a fourfold of type C-5 is the blow-up of a cubic fourfold in $\sC_8$ along a quadric surface. The argument is the same as in the preceding cases.

\subsection{C-6} ???? A fourfold of type C-6 is the blow-up of $\P^4$ with center a degree 6 K3 surface $Y$. We use Lemma \ref{bufrank1}(2) plus the injection

$$\Im(\CH^2(\P^4) \to \CH^2(Y))\hookrightarrow H^\ast(Y,\Q),$$
and we conclude.

\subsection{C-7} A fourfold of type C-7 is the blow-up of a cubic fourfold in $\sC_{14}$ with center a quintic del Pezzo surface. The del Pezzo quintic has trivial Chow groups and the same argument as in C-1 works.

\subsection{C-8} A fourfold of type C-8 is the blow-up of a cubic fourfold in $\sC_{8}$ with center a quartic del Pezzo surface. A del Pezzo quartic has trivial Chow groups and we apply the same argument as in C-1.

\subsection{C-9} A fourfold of type C-9 is the blow-up of a cubic fourfold in $\sC_{14}$ with center a quartic rational scroll. Once again, quartic rationl scrolls have trivial Chow group and we run the same argument as in C-1.

\subsection{C-12}  ??? A fourfold of type C-12 is the blow-up of a fourfold of type C-3 with center a degree 2 del Pezzo surface. By Lemma \cite{bufrank} the Fano fourfolds of example C-3 have the Franchetta property. A degree 2 del Pezzo surface has trivial chow groups, hence we can apply Proposition \ref{busurface}(1) and conclude.

\subsection{C-13} ??? A fourfold of type C-13 is the blow-up of a fourfold of type C-6 with center a degree 3 del Pezzo surface. The fourfolds from example C-6 have the Franchetta property thanks to Lemma \ref{bufrank}, and the cubic surfaces have trivial Chow groups. Hence, by Proposition \ref{busurface}(1), we conclude.

\subsection{C-14} A fourfold of type C-14 is the blow-up of a fourfold of type C-4 with center a degree 3 del Pezzo surface. We apply the same argument as in the preceding two cases.

\subsection{C-15} A fourfold of type C-15 is the blow-up of $\P^2\times\P^2$ along a K3 surface obtained as intersection of 2 divisors in $\P^2\times\P^2$ of bidegree $(2,1)$ and $(1,2)$.
The Franchetta property for these K3 surfaces comes from two facts. First, we have an injection

$$\Im(\CH^2(\P^2\times\P^2)\to \CH^2(Y))\hookrightarrow H^4(Y,\Q),$$

since $\CH^\ast(\P^2\times \P^2)$ is generated by divisors \cite{BV}.  Then we apply Lemma \ref{bufrank1}(2) ane obtain the Franchetta property. Finally, Proposition \ref{busurface} gives the claim.

\subsection{C-16} A fourfold of type C-16 is the blow-up of $X$ along a surface $Y$, where $X=Bl_{\P^1}\P^4$ is the blow-up of $\P^4$ along a line, and $Y$
is (the strict transform of) a sextic K3 surface in $\P^4$ containing this line.
The blow-up $Bl_{\P^1}\P^4$ has trivial Chow groups and then we use the Franchetta property for the sextic K3 surface. This is obtained by combining Lemma \ref{bufrank1}(2) and the straightforward injection

$$\Im(\CH^2(\P^4)\to \CH^2(S_6))\hookrightarrow H^4(S_6).$$

Proposition \ref{busurface} gives the claim.

\subsection{C-17}

From C-12 we know that Fano fourfolds from C-3 have the Franchetta property

\medskip

